\newtheorem{thm}{Theorem}[section]
\newtheorem{prop}[thm]{Proposition}
\newtheorem{lem}[thm]{Lemma}
\newtheorem{cor}[thm]{Corollary}
\theoremstyle{definition}
\newtheorem{defn}[thm]{Definition}
\newtheorem*{conv}{Convention}
\theoremstyle{remark}
\newtheorem{rem}[thm]{Remark}
\numberwithin{equation}{section}
\title{A lift of the Seiberg--Witten equations to Kaluza--Klein $5$-manifolds}
\author{M.~J.~D.~Hamilton}
\address{      Fachbereich Mathematik\\
               Universit\"at Stuttgart\\
               Pfaffenwaldring 57\\
               70569 Stuttgart\\
               Germany}
\email{mark.hamilton@math.lmu.de}
\date{\today}
\DeclareRobustCommand\longtwoheadrightarrow
\begin{document}

\date{\today}

\begin{abstract}
We consider Riemannian $4$-manifolds $(X,g_X)$ with a $\mathrm{Spin}^c$-structure and a suitable circle bundle $Y$ over $X$ such that the $\mathrm{Spin}^c$-structure on $X$ lifts to a spin structure on $Y$. With respect to these structures a spinor $\phi$ on $X$ lifts to an untwisted spinor $\psi$ on $Y$ and a $\mathrm{U}(1)$-gauge field $A$ for the $\mathrm{Spin}^c$-structure can be absorbed into a Kaluza--Klein metric $g_Y^A$ on $Y$. We show that irreducible solutions $(A,\phi)$ to the Seiberg--Witten equations on $(X,g_X)$ for the given $\mathrm{Spin}^c$-structure are equivalent to irreducible solutions $\psi$ of a Dirac equation with cubic non-linearity on the Kaluza--Klein circle bundle $(Y,g_Y^A)$. As an application we consider solutions to the equations in the case of Sasaki $5$-manifolds which are circle bundles over K\"ahler--Einstein surfaces.
\end{abstract}

\maketitle

\section{Introduction}\label{sect:intro}
Suppose that $M$ is a $4$-manifold with a Lorentz metric $g_M$ and an electromagnetic gauge field, i.e.~a $1$-form $A\in\Omega^1(M)$. The original Kaluza--Klein ansatz \cite{Kal}, \cite{Kle} is to consider the $5$-manifold $Y=M\times S^1$ and combine $g_M$ and $A$ to a Lorentz metric $g_Y$ on $Y$ which is invariant under the circle action. The $5$-dimensional vacuum Einstein field equations for $g_Y$ (i.e.~vanishing of the Ricci tensor) then imply the $4$-dimensional Einstein field equations for $g_M$ with electromagnetic source and the Maxwell equation for $A$.

Let $(X,g_X)$ be a smooth, closed, oriented, Riemannian $4$-manifold. We choose a $\mathrm{Spin}^c$-structure $\mathfrak{s}_X^c$ on $X$ with associated spinor bundle $S_X^c$ and characteristic line bundle $L$. Locally the spinor bundle $S_X^c$ is a tensor product of a standard spinor bundle and a square root $L^{\scriptscriptstyle\frac{1}{2}}$, i.e.~a twisted spinor bundle. Since $X$ does not necessarily admit a spin structure, the standard spinor bundle and the square root may not exist globally on $X$. However, $\mathrm{Spin}^c$-structures always exist on closed, oriented $4$-manifolds. 

The Seiberg--Witten equations \cite{SW1}, \cite {SW2}, \cite{W} are partial differential equations for a pair $(A,\phi)$, consisting of a Hermitian connection $A$ on $L$ and a positive Weyl spinor $\phi\in\Gamma(S_X^{c+})$. These equations can be used to define the Seiberg--Witten invariants of $X$ which have numerous applications to the differential geometry, symplectic geometry and topology of $4$-manifolds.

Let $\pi\colon Y\rightarrow X$ be the principal circle bundle with Euler class $e(Y)=c_1(L)$. For this choice of Euler class the $\mathrm{Spin}^c$-structure $\mathfrak{s}_X^c$ on $X$ lifts to a {\em spin} structure $\mathfrak{s}_Y$ on $Y$. For the associated spinor bundles, every spinor $\phi\in\Gamma(S_X^c)$ has a canonical lift to an untwisted spinor $\psi=Q(\phi)\in\Gamma(S_Y)$.

We can think of the connection $A$ as a $\mathrm{U}(1)$-connection $A\in\Omega^1(Y,i\mathbb{R})$ on the principal bundle $Y$ and define the Kaluza--Klein metric
\begin{equation*}
g_Y=g_Y^A=\pi^*g_X-A\otimes A
\end{equation*}
on $Y$, which is a Riemannian metric, because $A$ is assumed to be imaginary-valued.
\begin{thm}\label{thm:main thm n=1}
We consider the Seiberg--Witten equations on $X$:
\begin{equation}\label{eqn:SW eqns intro}
\begin{split}
D_A^X\phi&=0\\
F_A^+&=\sigma(\phi,\phi).
\end{split} 
\end{equation}
Let $\phi\in \Gamma(S_X^{c+})$ be a positive Weyl spinor on $X$ and $\psi =Q(\phi)\in \Gamma(S_Y)$ the lift to $Y$. If $(A,\phi)$ is a solution to the Seiberg--Witten equations \eqref{eqn:SW eqns intro}, then $\psi$ is a solution to the equation
\begin{equation}\label{eqn:DY eqn n=1}
D^Y\psi=\frac{1}{8}|\psi|^2\psi-\frac{1}{2}\psi.
\end{equation}
Here $D^Y$ is the Dirac operator on $S_Y$ for the Kaluza--Klein metric $g_Y^A$. If $\phi$ does not vanish identically on $X$, then the converse holds as well, i.e.~if $\psi=Q(\phi)$ is a solution to equation \eqref{eqn:DY eqn n=1}, then $(A,\phi)$ is a solution to the Seiberg--Witten equations \eqref{eqn:SW eqns intro}.
\end{thm}
Theorem \ref{thm:main thm n=1} is a special case of the more general Theorem \ref{thm:main thm all n}. In particular, the spin structure on $Y$ in Theorem \ref{thm:main thm n=1} is of odd type, cf.~Section \ref{sect:spin spinc kk bundles}. If $X$ is a spin manifold (i.e.~admits a spin structure), we can choose the spin structure on $Y$ to be even. We also consider the generalization to Kaluza--Klein circle bundles with fibres of length $2\pi r$, where $r$ can vary over the base manifold $X$, see Section \ref{sect:KK bundles fibres length 2pir}. The overall sign on the right hand side of equation \eqref{eqn:DY eqn n=1} depends on the relation between Clifford multiplication in dimensions $4$ and $5$, cf.~Remark \ref{rem:depend sign ident Cliff 4 5 dim}.
\begin{rem}
Pairs $(A,\phi)$ and spinors $\psi$ are called irreducible if $\phi$ and $\psi$ are not identically zero on $X$ and $Y$, respectively. The theorem implies that $(A,\phi)$ is an irreducible solution to the Seiberg--Witten equations \eqref{eqn:SW eqns intro} on $(X,g_X)$ if and only if $\psi=Q(\phi)$ is an irreducible solution to equation \eqref{eqn:DY eqn n=1} on the Kaluza--Klein circle bundle $(Y,g_Y^A)$. Reducible solutions $(A,0)$ to the Seiberg--Witten equations are given by connections $A$ so that $F_A^+=0$. The trivial spinor $\psi\equiv 0$ is a solution to equation \eqref{eqn:DY eqn n=1} without a condition on $A$.
\end{rem}
The pullback of a $\mathrm{Spin}^c$-structure $\mathfrak{s}_X^c$ to a spin structure $\mathfrak{s}_Y$ on the circle bundle $Y\rightarrow X$ defined by the characteristic line bundle of $\mathfrak{s}_X^c$ has been discussed in \cite{Mor}. The proof of Theorem \ref{thm:main thm n=1} and Theorem \ref{thm:main thm all n} depends on a calculation in \cite{AB} of the Dirac operator $D^Y$ on lifted spinors $\psi=Q(\phi)$, see Proposition \ref{prop:formula Dirac DY spin and non-spin}. The implication in the case $\phi\not\equiv 0$ from equation \eqref{eqn:DY eqn n=1} to the Seiberg--Witten equations \eqref{eqn:SW eqns intro} follows from the unique continuation property of Dirac operators, cf.~Proposition \ref{prop:equiv SW multiplied phi}.

Both Seiberg--Witten equations on $X$ combine to a single equation on $Y$ because the Dirac operator $D^Y$ maps the lift of a positive Weyl spinor on $X$ to the lift of a mixed spinor. Similarly the map (with Clifford multiplication in the second entry)
\begin{align*}
f\colon\Gamma(S_X^{c+})&\longrightarrow \Gamma(S_X^{c-})\oplus \Gamma(S_X^{c+})\\
\phi&\longmapsto \left(D_A^X\phi,(F_A^+-\sigma(\phi,\phi))\cdot\phi\right),
\end{align*}
has mixed image. Equation \eqref{eqn:DY eqn n=1} is a lift of the equation $f(\phi)=0$. Note that the quadratic non-linearity $\sigma(\phi,\phi)$ in the Seiberg--Witten equations becomes the non-linearity $|\psi|^2$.
\begin{rem}
In Seiberg--Witten theory one often considers the space of all solutions $(A,\phi)$ to equations \eqref{eqn:SW eqns intro} where the $\mathrm{Spin}^c$-structure $\mathfrak{s}_X^c$ and Riemannian metric $g_X$ are considered as fixed parameters. For the corresponding space of solutions $\psi$ of equation \eqref{eqn:DY eqn n=1} on $Y$ the Riemannian metric $g_Y^A$ varies, because $g_Y^A$ depends on $A$. For a given metric $g_X$, the Kaluza--Klein construction can be viewed as an embedding of the space of connections on $Y$ into the space of Riemannian metrics on $Y$.
\end{rem}
\begin{rem} The implications of the invariance of the Seiberg--Witten equations under gauge transformations and charge conjugation are discussed in Section \ref{sect:gauge transform charge conj}.
\end{rem}
\begin{rem}
Equation \eqref{eqn:DY eqn n=1} makes sense for spinors $\psi\in\Gamma(S_Y)$ on any oriented Riemannian $5$-manifold $(Y,g_Y)$ (or, more generally, an $(n+1)$-manifold of arbitrary dimension $n+1$) with a spin structure. The solutions of an equation of type
\begin{equation*}
D^Y\psi=-\kappa|\psi|^2\psi+m\psi
\end{equation*}
with constants $\kappa,m\in\mathbb{R}$ on $(Y,g_Y)$ arise as critical points of the functional (see \cite{I})
\begin{equation*}
S[\psi]=\int_Y\mathcal{L}[\psi]\,\mathrm{dvol}_{g_Y}
\end{equation*}
for the quartic Lagrangian
\begin{equation*}
\mathcal{L}[\psi]=\langle \psi,D^Y\psi\rangle -m|\psi|^2+\frac{1}{2}\kappa|\psi|^4.
\end{equation*}
This Lagrangian is related to the four-fermion interaction studied in the $2$-dimensional Gross--Neveu model \cite{GN}.
\end{rem}
As an application we consider in Section \ref{sect:Sasaki} the Boothby-Wang construction of circle bundles over closed K\"ahler--Einstein surfaces $(X,g_X,J,\omega)$ with $\mathrm{Ric}_{g_X}=\lambda g_X$ and Einstein constant $\lambda\neq 0$. There are canonical $\mathrm{Spin}^c$-structures on $X$ with characteristic line bundles $K$ and $K^{-1}$, defined by the complex structure $J$. The (perturbed) Seiberg--Witten equations for both $\mathrm{Spin}^c$-structures have canonical solutions $(A_0,\phi_0)$ with $|\phi_0|\equiv\mathrm{const}$. In this situation the circle bundle $(Y,g_Y)$ is a Sasaki $\eta$-Einstein $5$-manifold (if the $S^1$-fibres have a suitable constant length, depending on $|\lambda|$) and the spinors $\phi_0$ lift to eigenspinors $\psi$ of the Dirac operator $D^Y$. If the Einstein constant $\lambda$ is negative and $g_X$ normalized so that $\lambda=-4$, the spinors $\psi$ are harmonic, i.e.~in the kernel of $D^Y$. If $\lambda>0$ and $g_X$ normalized so that $\lambda=6$, the $5$-manifold $(Y,
g_Y)$ is Sasaki--Einstein and the spinors $\psi$ are Killing spinors.  
\begin{rem}
Other generalizations of the Seiberg--Witten equations to $5$-manifolds can be found in several references, including \cite{DB}, \cite {KLW}, \cite{Pan}. 
\end{rem}
\begin{conv}\label{convention}
In the following all manifolds are smooth, non-empty, connected and oriented if not stated otherwise. 
\end{conv}

\section{Kaluza--Klein circle bundles}\label{sect:kk circle bundles}
Let $(X^4,g_X)$ be a closed, oriented, Riemannian $4$-manifold and 
\begin{equation*}
\pi\colon Y^5\longrightarrow X^4
\end{equation*}
the oriented principal $S^1$-bundle with Euler class $e\in H^2(X;\mathbb{Z})$ and group action $S^1\times Y\rightarrow Y$. We denote by $K$ the vector field on $Y$ along the fibres, given by the infinitesimal action of a fixed element in the Lie algebra $\mathfrak{u}(1)$ of $S^1$, normalized such that the flow of $K$ has period $2\pi$. 

We define a Riemannian metric $g_Y$ on $Y$ with the following Kaluza--Klein ansatz \cite{Bour}: Let $A\in\Omega^1(Y,i\mathbb{R})$ be a $\mathrm{U}(1)$-connection on the principal bundle $Y\rightarrow X$. This means that $A$ is invariant under the circle action,
\begin{equation*}
L_KA=0,
\end{equation*}  
and normalized such that
\begin{equation*}
A(K)\equiv i.
\end{equation*}
The curvature $2$-form $F_A\in \Omega^2(X,i\mathbb{R})$ satisfies
\begin{equation*}
\pi^*F_A=dA.
\end{equation*}
\begin{defn}\label{defn:KK metric}
The Riemannian metric
\begin{equation*}
g_Y=g_Y^A=\pi^*g_X-A\otimes A
\end{equation*}
is called the {\em Kaluza--Klein metric} on $Y$. We call the principal circle bundle $Y\rightarrow X$ over a Riemannian manifold $(X,g_X)$ together with the Kaluza--Klein metric $g_Y$ for a connection $A$ on $Y$ a {\em Kaluza--Klein circle bundle}.
\end{defn}
The metric $g_Y$ has the following properties:
\begin{itemize}
\item The horizontal bundle $H=\mathrm{ker}\,A$ is orthogonal to the circle fibres.
\item $g_Y|_H=\pi^*g_X$.
\item $g_Y(K,K)=1$, i.e.~the circle fibres have length $2\pi$.
\item $K$ is a Killing vector field for $g_Y$, i.e.~$L_Kg_Y=0$.
\end{itemize}
The Kaluza--Klein metric $g_Y$ on $Y$ is completely characterized by these properties.

Let $E\rightarrow X$ be the complex line bundle (unique up to isomorphism) with $c_1(E)=e$. We can define $E$ as the complex line bundle associated to $Y$ via the standard representation of $S^1$ on $\mathbb{C}$. Then $E$ has a Hermitian metric and $Y$ can be thought of as the unit circle bundle in $E$. The connection $A$ on $Y$ induces a connection (covariant derivative) on $E$ which is compatible with the Hermitian metric and vice versa.
\begin{lem}\label{lem:only bundles trivial lift Ln}
The kernel of $\pi^*\colon H^2(X;\mathbb{Z})\longrightarrow H^2(Y;\mathbb{Z})$ is $\mathbb{Z}e$.
\end{lem}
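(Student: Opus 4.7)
The plan is to deduce the statement directly from the Gysin sequence of the oriented circle bundle $\pi\colon Y\to X$. For an oriented $S^1$-bundle with Euler class $e\in H^2(X;\mathbb{Z})$, the Gysin sequence has the form
\begin{equation*}
\cdots \longrightarrow H^{k-2}(X;\mathbb{Z}) \xrightarrow{\,\cup\, e\,} H^{k}(X;\mathbb{Z}) \xrightarrow{\,\pi^{*}\,} H^{k}(Y;\mathbb{Z}) \xrightarrow{\,\pi_{*}\,} H^{k-1}(X;\mathbb{Z}) \longrightarrow \cdots
\end{equation*}
so in particular the segment
\begin{equation*}
H^{0}(X;\mathbb{Z}) \xrightarrow{\,\cup\, e\,} H^{2}(X;\mathbb{Z}) \xrightarrow{\,\pi^{*}\,} H^{2}(Y;\mathbb{Z})
\end{equation*}
is exact, which identifies $\ker\pi^{*}$ with the image of cup product with $e$.

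From there the first step is to recall that $X$ is closed, oriented and (by the Convention) connected, so $H^{0}(X;\mathbb{Z})\cong\mathbb{Z}$ with generator $1\in H^{0}(X;\mathbb{Z})$. Since $1\cup e=e$, the image of $\cup\, e$ is exactly the cyclic subgroup $\mathbb{Z}e\subset H^{2}(X;\mathbb{Z})$, which gives the claim.

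The only genuine input needed beyond the statement of the Gysin sequence is that the connecting map in that sequence really is cup product with the Euler class; this is the standard formulation for oriented sphere bundles and so does not require separate verification here. I do not anticipate any obstacle: once the Gysin sequence is in hand, the argument is a one-line exactness computation, and the conclusion $\ker\pi^{*}=\mathbb{Z}e$ follows. (As a sanity check one may note that the statement is consistent with the choice $e=c_1(L)$ used to define the lift in the main theorem, since then $\pi^{*}L$ is trivial by Lemma \ref{lem:tautological triv pi*L}, and the lemma asserts that among all complex line bundles on $X$ the multiples of $L$ are precisely the ones for which this trivialization phenomenon occurs.)
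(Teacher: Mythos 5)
Your argument is correct and is exactly the paper's proof: the same segment $H^0(X;\mathbb{Z})\xrightarrow{\cup e}H^2(X;\mathbb{Z})\xrightarrow{\pi^*}H^2(Y;\mathbb{Z})$ of the Gysin sequence, with the image of $\cup\,e$ identified as $\mathbb{Z}e$ since $X$ is connected. You merely spell out the exactness step that the paper leaves implicit.
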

\begin{proof}
Recall from Convention \ref{convention} that $X$ is assumed non-empty and connected, hence $H^0(X;\mathbb{Z})\cong\mathbb{Z}$. The claim is then equivalent to exactness of the Gysin sequence
\begin{equation*}
H^0(X;\mathbb{Z})\stackrel{\cup e}{\longrightarrow}H^2(X;\mathbb{Z})\stackrel{\pi^*}{\longrightarrow} H^2(Y;\mathbb{Z})\longrightarrow\ldots
\end{equation*}
at $H^2(X;\mathbb{Z})$.
\end{proof}
The lemma implies:
\begin{prop}
Let $\pi\colon Y\rightarrow X$ be the principal circle bundle with Euler class $e$ and $L\rightarrow X$ a complex line bundle on $X$. Then $\pi^*L$ is a trivial line bundle if and only if $c_1(L)\in\mathbb{Z}e$.
\end{prop}
Let $\mathfrak{s}_X^c$ be a $\mathrm{Spin}^c$-structure on $X$ with characteristic line bundle $L$. The heuristic idea to define the lift of $\mathfrak{s}_X^c$ to a spin structure on $Y$ is to choose a Kaluza--Klein circle bundle $Y\rightarrow X$ with Euler class $e$, so that $c_1(L)\in\mathbb{Z}e$. We can then lift $\mathfrak{s}_X^c$ to a $\mathrm{Spin}^c$-structure $\mathfrak{s}_Y^c$ on $Y$ with characteristic line bundle $\pi^*L$. Since this bundle is trivial, it corresponds to a spin structure $\mathfrak{s}_Y$ on $Y$. The details will be worked out in Sections \ref{sect:spin and spinc str on mfds}--\ref{sect:spinors KK} below.

\section{Spin structures and $\mathrm{Spin}^c$-structures on manifolds}\label{sect:spin and spinc str on mfds}

We collect some background material on spin and $\mathrm{Spin}^c$-structures (more details can be found, for example, in \cite{BHMMM}, \cite{F}, \cite{LM}, \cite{Morgan}). Let $(M,g_M)$ be an oriented Riemannian manifold of dimension $n$.

\subsection{Existence and classification}
We consider the double covering
\begin{equation*}
\lambda\colon\mathrm{Spin}(n)\longrightarrow\mathrm{SO}(n),
\end{equation*}
which is the universal covering if $n\geq 3$. With respect to the Riemannian metric $g_M$ the set of oriented orthonormal frames in $TM$ forms a principal bundle $P_{\mathrm{SO}}(M)$ over $M$ with action
\begin{equation*}
P_{\mathrm{SO}}(M)\times\mathrm{SO}(n)\longrightarrow P_{\mathrm{SO}}(M).
\end{equation*}
A spin structure on $M$ is a principal bundle
\begin{equation*}
P_{\mathrm{Spin}}(M)\times\mathrm{Spin}(n)\longrightarrow P_{\mathrm{Spin}}(M)
\end{equation*}
over $M$ together with a smooth bundle map $\mathfrak{s}\colon P_{\mathrm{Spin}}(M)\rightarrow P_{\mathrm{SO}}(M)$ which is equivariant with respect to the homomorphism $\lambda$, i.e.~the following diagram commutes:
\begin{equation*}
\begin{tikzcd}
P_{\mathrm{Spin}}(M)&[-3em]\times\ar[dd, "\mathfrak{s}\times \lambda"]&[-3em]\mathrm{Spin}(n)\ar[r] & P_{\mathrm{Spin}}(M)\ar[dd, "\mathfrak{s}"]\ar[rd] &\\
& & &&M \\
P_{\mathrm{SO}}(M)&\times& \mathrm{SO}(n)\ar[r] & P_{\mathrm{SO}}(M)\ar[ru] &
\end{tikzcd}
\end{equation*}
It follows that the spin structure $\mathfrak{s}$ is a double covering which restricts on each fibre of the principal bundles to a double covering equivalent to $\lambda$.

We denote by
\begin{equation*}
\kappa_n\colon\mathrm{Spin}(n)\longrightarrow U(\Delta_n)    
\end{equation*}
the unitary complex (Dirac) spinor representation and by
\begin{equation*}
S_M=P_{\mathrm{Spin}}(M)\times_{\kappa_n}\Delta_n    
\end{equation*}
the associated Hermitian (Dirac) spinor bundle of $\mathfrak{s}$.

The spinor representation is the restriction of an irreducible representation of the complex Clifford algebra $\mathbb{C}\mathrm{l}(n)$ on $\Delta_n$. The embedding $\mathbb{R}^n\subset \mathbb{C}\mathrm{l}(n)$ and the vector space isomorphism $\Lambda^*\mathbb{R}^n\otimes\mathbb{C}\cong\mathbb{C}\mathrm{l}(n)$ yield bilinear, fibrewise Clifford multiplications
\begin{align*}
TM\times S_M&\longrightarrow S_M,\quad (V,\phi)\longmapsto \gamma(V)\phi=V\cdot\phi\\
\Lambda^*TM\times S_M&\longrightarrow S_M,\quad (\tau,\phi)\longmapsto \gamma(\tau)\phi=\tau\cdot\phi
\end{align*}
which are compatible with the Hermitian bundle metric on $S_M$ and extend via $g_M$ to the dual bundles $T^*M$ and $\Lambda^*T^*M$ and to the complexifications of these bundles. Clifford multiplication with tangent vectors $V,W\in TM$ satisfies
\begin{equation*}
V\cdot W\cdot\phi+W\cdot V\cdot\phi=-2g_M(V,W)\phi\quad\forall \phi\in S_M.
\end{equation*}
The Levi--Civita connection of the Riemannian metric $g_M$ defines a Clifford connection $\nabla^M$ on $S_M$. Together with Clifford multiplication by tangent vectors we get the Dirac operator 
\begin{equation*}
D^M\colon \Gamma(S_M)\longrightarrow \Gamma(S_M),
\end{equation*}
a first order linear differential operator.

If $n=\dim M$ is even, there is a unique irreducible representation of $\mathbb{C}\mathrm{l}(n)$ on $\Delta_n$ and the spinor representation of $\mathrm{Spin}(n)$ decomposes into the direct sum of two irreducible Weyl representations on subspaces $\Delta_n^+$, $\Delta_n^-$ of half dimension. There is a corresponding decomposition of the spinor bundle
\begin{equation}\label{eqn:Weyl spinor split}
S_M=S_M^+\oplus S_M^-    
\end{equation}
into Weyl spinor bundles. Clifford multiplication with a tangent vector and the Dirac operator map 
\begin{equation}\label{eqn:Clifford Dirac Weyl split}
\begin{split}
TM\times S_M^{\pm}&\longrightarrow S_M^{\mp}\\
D^M\colon \Gamma(S_M^{\pm})&\longrightarrow \Gamma(S_M^{\mp}).
\end{split}
\end{equation}
If $n=\dim M$ is odd, there are two inequivalent irreducible representation of $\mathbb{C}\mathrm{l}(n)$ on $\Delta_n$ and the restrictions to $\mathrm{Spin}(n)$ are equivalent and irreducible.

In dimension $4$ and $5$, the cases we are most interested in, both $\Delta_4$ and $\Delta_5$ can be identified as a complex vector space with $\Delta=\mathbb{C}^4$. The Weyl spinor spaces $\Delta_4^\pm$ are given as the $\pm 1$-eigenspaces of 
\begin{equation}\label{eqn:omegaC dim 4}
\omega_{\mathbb{C}}=-e_1e_2e_3e_4\in\mathbb{C}\mathrm{l}(4)
\end{equation}
for an oriented orthonormal basis $e_1,e_2,e_3,e_4$ of $\mathbb{R}^4$. Clifford multiplications $\cdot_4$ and $\cdot_5$ on $\Delta$ are related as follows: Identify $\mathbb{R}^4=\{0\}\times\mathbb{R}^4\subset\mathbb{R}^5$ and let $e_0,e_1,e_2,e_3,e_4$ be an orthonormal basis of $\mathbb{R}^5$ adapted to this embedding. Then
\begin{equation}\label{eqn:Clifford mult Delta dim 4 and dim 5}
\begin{split}
e_k\cdot_5\phi&=e_k\cdot_4\phi\\
e_0\cdot_5\phi&=ie_1e_2e_3e_4\cdot_4\phi
\end{split}    
\end{equation}
for $k=1,2,3,4$ and all $\phi\in\Delta$. This corresponds to the irreducible representation of $\mathbb{C}\mathrm{l}(5)$ where $\omega_{\mathbb{C}}=-ie_0e_1e_2e_3e_4\in \mathbb{C}\mathrm{l}(5)$ acts as $+1$ (in the other representation this element acts as $-1$, see \cite[Ch.~I, Proposition 5.9]{LM}).

Similarly, the Lie group $\mathrm{Spin}^c(n)$ is defined by
\begin{equation*}
\mathrm{Spin}^c(n)=(\mathrm{Spin}(n)\times \mathrm{U}(1))/\mathbb{Z}_2,
\end{equation*}
where the non-trivial element of $\mathbb{Z}_2$ acts as $(-1,-1)$. There are Lie group homomorphisms
\begin{align*}
\lambda^c\colon\mathrm{Spin}^c(n)&\longrightarrow  \mathrm{SO}(n),\quad[g,z]\longmapsto \lambda(g)\\
\chi\colon\mathrm{Spin}^c(n)&\longrightarrow \mathrm{U}(1),\quad[g,z]\longmapsto z^2.
\end{align*}
These homomorphisms together yield a double covering
\begin{equation*}
(\lambda^c,\chi)\colon \mathrm{Spin}^c(n)\longrightarrow \mathrm{SO}(n)\times\mathrm{U}(1).
\end{equation*}
A $\mathrm{Spin}^c$-structure on $M$ is a principal bundle
\begin{equation*}
P_{\mathrm{Spin}^c}(M)\times\mathrm{Spin}^c(n)\longrightarrow P_{\mathrm{Spin}^c}(M)
\end{equation*}
over $M$ together with a smooth bundle map $\mathfrak{s}^c\colon P_{\mathrm{Spin}^c}(M)\rightarrow P_{\mathrm{SO}}(M)$, so that the following diagram commutes:
\begin{equation*}
\begin{tikzcd}
P_{\mathrm{Spin}^c}(M)&[-3em]\times\ar[dd, "\mathfrak{s}^c\times\lambda^c"]&[-3em] \mathrm{Spin}^c(n)\ar[r] & P_{\mathrm{Spin}^c}(M)\ar[dd, "\mathfrak{s}^c"]\ar[rd] &\\
&& &  &M \\
P_{\mathrm{SO}}(M)&\times &\mathrm{SO}(n)\ar[r] & P_{\mathrm{SO}}(M)\ar[ru] &
\end{tikzcd}
\end{equation*}
For a $\mathrm{Spin}^c$-structure $\mathfrak{s}^c$ the homomorphism $\chi$ defines an associated principal $\mathrm{U}(1)$-bundle
\begin{equation*}
P_{\mathrm{U}(1)}(M)=P_{\mathrm{Spin}^c}(M)\times_\chi \mathrm{U}(1)
\end{equation*}
and with the standard representation of $\mathrm{U}(1)$ on $\mathbb{C}$ a complex line bundle
\begin{equation*}
L=L_{\mathfrak{s}^c}=P_{\mathrm{Spin}^c}(M)\times_\chi \mathbb{C}
\end{equation*} 
with a Hermitian metric, called the {\em characteristic line bundle} of the $\mathrm{Spin}^c$-structure (in the case of $\dim M=4$, the complex line bundle $L$ is sometimes called the determinant line bundle because it is isomorphic to the top exterior power of both Weyl spinor bundles). 

In the following we denote the projection of these principal bundles to $M$ by $\pi_{\mathrm{SO}}$, $\pi_{\mathrm{U}(1)}$, etc. It is sometimes useful to consider the $\mathrm{SO}(n)\times \mathrm{U}(1)$-principal bundle given by the fibre product
\begin{equation*}
P_{\mathrm{SO}\times\mathrm{U}(1)}(M)=\{(p,q)\in P_{\mathrm{SO}}(M)\times P_{\mathrm{U}(1)}(M)\mid \pi_{\mathrm{SO}}(p)=\pi_{\mathrm{U}(1)}(q)\}.
\end{equation*}
A $\mathrm{Spin}^c$-structure $\mathfrak{s}^c$ with associated bundle $P_{\mathrm{U}(1)}(M)$ yields a double covering
\begin{equation*}
\tilde{\mathfrak{s}}^c\colon P_{\mathrm{Spin}^c}(M)\longrightarrow P_{\mathrm{SO}\times\mathrm{U}(1)}(M)
\end{equation*} 
which restricts on each fibre to a double covering equivalent to $(\lambda^c,\chi)$ (see e.g.~\cite[Appendix D]{LM}).

The spinor representation $\kappa_n$ of $\mathrm{Spin}(n)$ together with the standard representation of $\mathrm{U}(1)$ define the unitary (Dirac) spinor representation $\kappa_n^c$ of $\mathrm{Spin}^c(n)$ on the complex vector space $\Delta_n$. A $\mathrm{Spin}^c$-structure defines an associated Hermitian (Dirac) spinor bundle
\begin{equation*}
S_M^c=P_{\mathrm{Spin}^c}(M)\times_{\kappa_n^c}\Delta_n    
\end{equation*}
with a Clifford multiplication. The Levi--Civita connection of the Riemannian metric $g_M$ together with the choice of a Hermitian connection $A$ on $L$ define a Clifford connection $\nabla^M_A$ on $S_M^c$. We get a Dirac operator 
\begin{equation*}
D_A^M\colon \Gamma(S_M^c)\longrightarrow \Gamma(S_M^c).
\end{equation*}
In even dimension $n=\dim M$ there is a splitting $S_M^c=S_M^{c+}\oplus S_M^{c-}$ into Weyl spinor bundles so that the properties corresponding to \eqref{eqn:Clifford Dirac Weyl split} hold.

\begin{defn} Let $\mathcal{S}(M)$ and $\mathcal{S}^c(M)$ denote the set of all isomorphism classes of spin structures and $\mathrm{Spin}^c$-structures on $M$. We denote the isomorphism class of a spin structure $\mathfrak{s}$ or a $\mathrm{Spin}^c$-structure $\mathfrak{s}^c$ by the same symbol.
\end{defn}
The structure of the sets $\mathcal{S}(M)$ and $\mathcal{S}^c(M)$ is well-known.
\begin{prop}
The set $\mathcal{S}(M)$ is non-empty if and only if the second Stiefel-Whitney class of $TM$ vanishes, $w_2(M)=0$. In this case $\mathcal{S}(M)$ is a torsor (a set with a free and transitive action) over $H^1(M;\mathbb{Z}_2)$.
\end{prop}
\begin{prop}\label{prop:spinc torsor}
The set $\mathcal{S}^c(M)$ is non-empty if and only if there exists an element $\alpha\in H^2(M;\mathbb{Z})$ with $w_2(M)\equiv \alpha\bmod 2$. In this case $\mathcal{S}^c(M)$ is a torsor over $H^2(M;\mathbb{Z})$ and 
\begin{equation*}
w_2(M)\equiv c_1(L_{\mathfrak{s}^c})\bmod 2
\end{equation*}
for every $\mathrm{Spin}^c$-structure $\mathfrak{s}^c$. Denoting the action of an element $\alpha\in H^2(X;\mathbb{Z})$ on a $\mathrm{Spin}^c$-structure by $\mathfrak{s}^c\mapsto \mathfrak{s}^c+\alpha$, the action on the associated Chern class is given by $c_1(L_{\mathfrak{s}^c})\mapsto c_1(L_{\mathfrak{s}^c})+2\alpha$. Moreover, for any oriented $4$-manifold $M$ the set $\mathcal{S}^c(M)$ is non-empty.
\end{prop}
\begin{rem}\label{rem:spinc torsor}
We can write the class $\alpha\in H^2(M;\mathbb{Z})$ as $\alpha=c_1(E)$ for a complex line bundle $E$ over $M$ and then denote the action by $\mathfrak{s}^c\mapsto \mathfrak{s}^c\otimes E$ and $L_{\mathfrak{s}^c}\mapsto L_{\mathfrak{s}^c}\otimes E^{\otimes 2}$.
\end{rem}
\begin{defn}
If the sets $\mathcal{S}(M)$ or $\mathcal{S}^c(M)$ are non-empty, we say that $M$ is a {\em spin} or $\mathit{Spin^c}${\em-manifold}, respectively. 
\end{defn}
Hence our notion of spin and $\mathrm{Spin}^c$-manifold signifies only the existence of such a structure and not that a specific one has been chosen.

\subsection{Relation between spin and $\mathrm{Spin}^c$-structures}
Our goal in this section is to understand the correspondence between spin structures and $\mathrm{Spin}^c$-structures with vanishing first Chern class.
\begin{defn}
We define the set
\begin{equation*}
\mathcal{S}^c_0(M)=\{\mathfrak{s}^c\in\mathcal{S}^c(M)\mid c_1(L_{\mathfrak{s}^c})=0\}.
\end{equation*}
\end{defn}
Consider the Bockstein sequence
\begin{equation*}
H^1(M;\mathbb{Z})\stackrel{\cdot 2}{\longrightarrow}H^1(M;\mathbb{Z})\stackrel{\bmod 2}{\longrightarrow}H^1(M;\mathbb{Z}_2)\stackrel{\beta}{\longrightarrow} H^2(M;\mathbb{Z})\stackrel{\cdot 2}{\longrightarrow}H^2(M;\mathbb{Z}).
\end{equation*}
associated to the short exact sequence
\begin{equation*}
0\longrightarrow\mathbb{Z}\stackrel{\cdot 2}{\longrightarrow}\mathbb{Z}\stackrel{\bmod 2}{\longrightarrow}\mathbb{Z}_2\longrightarrow 0.
\end{equation*}
We have
\begin{align*}
\mathrm{ker}\,\beta&\cong H^1(M;\mathbb{Z})/2H^1(M;\mathbb{Z})\cong(\mathbb{Z}_2)^{b_1(M)}\\
\mathrm{im}\,\beta&=\mathrm{Tor}_2(H^2(M;\mathbb{Z})),
\end{align*}
where $\mathrm{Tor}_2$ denotes the elements of order $2$. The following lemma follows immediately from Proposition \ref{prop:spinc torsor}.
\begin{lem}
If the set $\mathcal{S}^c_0(M)$ is non-empty, then it is a torsor over the group $\mathrm{im}\,\beta$.
\end{lem}
The following theorem shows that spin structures induce $\mathrm{Spin}^c$-structures. The first part of this theorem was proved in \cite[p.~49-50]{G}. The second part on isomorphisms is a direct consequence. 
\begin{thm} Let $M$ be a spin manifold. Then there exists a surjective map
\begin{equation*}
\Theta\colon \mathcal{S}(M)\longrightarrow \mathcal{S}^c_0(M)
\end{equation*}
which is equivariant with respect to the Bockstein epimorphism $\beta\colon H^1(M;\mathbb{Z}_2)\rightarrow \mathrm{im}\,\beta$. It descends to an isomorphism of torsors
\begin{equation*}
\bar{\Theta}\colon \mathcal{S}(M)/\mathrm{ker}\,\beta\stackrel{\cong}{\longrightarrow} \mathcal{S}^c_0(M)
\end{equation*}
with respect to the induced group isomorphism $H^1(M;\mathbb{Z}_2)/\mathrm{ker}\,\beta\cong\mathrm{im}\,\beta$.
\end{thm}
In particular, $\mathcal{S}(M)$ decomposes into $|\mathrm{im}\,\beta|$ subsets, each of which contains $|\mathrm{ker}\,\beta|$ isomorphism classes of spin structures that map to the same isomorphism class of $\mathrm{Spin}^c$-structures.
\begin{defn}
We denote by $\mathfrak{s}_0^c=\Theta(\mathfrak{s})$ the $\mathrm{Spin}^c$-structure induced by the spin structure $\mathfrak{s}$. 
\end{defn}
\begin{defn}
Let $\mathfrak{s}$ be a spin structure and $\mathfrak{s}^c$ a $\mathrm{Spin}^c$-structure on $M$ with characteristic line bundle $L$. By Proposition \ref{prop:spinc torsor} and Remark \ref{rem:spinc torsor} there exists a unique complex line bundle $L^{\scriptscriptstyle\frac{1}{2}}$ on $M$, up to isomorphism, so that $\mathfrak{s}^c=\mathfrak{s}_0^c\otimes L^{\scriptscriptstyle\frac{1}{2}}$. This line bundle satisfies $(L^{\scriptscriptstyle\frac{1}{2}})^{\otimes 2}\cong L$ and is called the {\em square root} of $L$ determined by $\mathfrak{s}$.
\end{defn}
A right inverse to the map $\Theta$ can be constructed as follows.
\begin{prop}\label{prop:right inverse Theta}
A $\mathrm{Spin}^c$-structure $\mathfrak{s}^c$ on $M$ with $c_1(L_{\mathfrak{s}^c})=0$ together with a trivialization of $L_{\mathfrak{s}^c}$ determines a unique isomorphism class $\mathfrak{s}$ of spin structures on $M$ so that $\Theta(\mathfrak{s})=\mathfrak{s}^c$.
\end{prop}
\begin{proof}
We first give an explicit construction of the map $\Theta$, cf.~\cite[Example D.5]{LM}. Given a spin structure $\mathfrak{s}\colon P_{\mathrm{Spin}}(M)\rightarrow P_{\mathrm{SO}}(M)$ we define the $\mathrm{Spin}^c(n)$-principal bundle
\begin{equation*}
(P_{\mathrm{Spin}}(M)\times \mathrm{U}(1))/\mathbb{Z}_2,
\end{equation*}
where the non-trivial element of $\mathbb{Z}_2$ acts as $(-1,-1)$. We then get a $\mathrm{Spin}^c$-structure
\begin{equation*}
\Theta(\mathfrak{s})\colon (P_{\mathrm{Spin}}(M)\times \mathrm{U}(1))/\mathbb{Z}_2\longrightarrow P_{\mathrm{SO}}(M),\quad [p,z]\longmapsto \mathfrak{s}(p).
\end{equation*}
Conversely, consider a $\mathrm{Spin}^c$-structure $\mathfrak{s}^c$ with trivialized bundle
\begin{equation*}
P_{\mathrm{U}(1)}(M)=M\times \mathrm{U}(1).
\end{equation*}
There exists a bundle isomorphism
\begin{align*}
P_{\mathrm{SO}}(M)\times \mathrm{U}(1)&\stackrel{\cong}{\longrightarrow} P_{\mathrm{SO}\times\mathrm{U}(1)}(M)\\
(p,z)&\longmapsto (p,(\pi_{\mathrm{SO}}(p),z)).
\end{align*}
The $\mathrm{Spin}^c$-structure defines a double covering 
\begin{equation*}
\tilde{\mathfrak{s}}^c\colon P_{\mathrm{Spin}^c}(M)\longrightarrow P_{\mathrm{SO}}(M)\times \mathrm{U}(1)
\end{equation*}
which is equivariant with respect to the homomorphism $(\lambda^c,\chi)$. The double covering $\tilde{\mathfrak{s}}^c$ restricted to the preimage $P_{\mathrm{Spin}}(M)$ of $P_{\mathrm{SO}}(M)\times\{1\}$ together with the action of $\mathrm{ker}\,\chi\cong\mathrm{Spin}(n)$ defines a spin structure $\mathfrak{s}$ on $M$. By construction $P_{\mathrm{Spin}}(M)\subset P_{\mathrm{Spin}^c}(M)$. The bundle isomorphism
\begin{equation*}
(P_{\mathrm{Spin}}(M)\times \mathrm{U}(1))/\mathbb{Z}_2\longrightarrow P_{\mathrm{Spin}^c}(M),\quad [p,z]\longmapsto pz
\end{equation*}
shows that the spin structure $\mathfrak{s}$ satisfies $\Theta(\mathfrak{s})=\mathfrak{s}^c$.
\end{proof}
\begin{rem}
The construction of the spin structure from the $\mathrm{Spin}^c$-structure is from \cite[Lemma 3.1]{Mor}. In this reference, however, the dependence on the trivialization of $L_{\mathfrak{s}^c}$ is not stated explicitly.
\end{rem}

\section{Spin and $\mathrm{Spin}^c$-structures on Kaluza--Klein circle bundles}\label{sect:spin spinc kk bundles}

\begin{lem}\label{lem:circle bundle spin}
Let $X$ be an oriented manifold and $\pi\colon Y\rightarrow X$ the principal circle bundle with Euler class $e\in H^2(X;\mathbb{Z})$. Then $Y$ is spin if and only if one of the following conditions holds:
\begin{enumerate}
\item $X$ is spin
\item $w_2(X)\equiv e\bmod 2$
\end{enumerate}
\end{lem}
\begin{proof}
We follow the proof in \cite{MHDiss}. The manifold $Y$ is spin if and only if $w_2(Y)=0$. Applying the Whitney sum formula to the decomposition $TY=\pi^*TX\oplus\underline{\mathbb{R}}$, where $\underline{\mathbb{R}}$ is the trivial vertical line bundle, shows that $w_2(Y)=\pi^*w_2(X)$. Similar to the proof of Lemma \ref{lem:only bundles trivial lift Ln} exactness of the long exact $\mathbb{Z}_2$-Gysin sequence
\begin{equation*}
H^0(X;\mathbb{Z}_2)\stackrel{\cup e \bmod 2}{\longrightarrow}H^2(X;\mathbb{Z}_2)\stackrel{\pi^*}{\longrightarrow} H^2(Y;\mathbb{Z}_2)\longrightarrow\ldots
\end{equation*}
together with $H^0(X;\mathbb{Z}_2)\cong\mathbb{Z}_2$ by Convention \ref{convention} imply that the kernel of $\pi^*$ on $H^2(X;\mathbb{Z}_2)$ is equal to $\{0,e\bmod 2\}$.
\end{proof}
Spin structures on the total space of a principal circle bundle can be either {\em even} or {\em odd} with respect to the circle action \cite{AH, Bor, Amm, Sati}. Let $(X,g_X)$ be an oriented Riemannian $n$-manifold and $Y\rightarrow X$ the Kaluza--Klein circle bundle with Riemannian metric $g_Y^A$ and free, isometric $S^1$-action denoted by
\begin{equation*}
\Phi\colon S^1\times Y\longrightarrow Y,\quad (e^{it},y)\mapsto e^{it}y.
\end{equation*}
The induced free action of $S^1$ on $P_{\mathrm{SO}}(Y)$, given by the differential, commutes with the $\mathrm{SO}(n+1)$-action and defines a smooth family of diffeomorphisms
\begin{equation*}
\alpha\colon [0,2\pi]\times P_{\mathrm{SO}}(Y) \longrightarrow P_{\mathrm{SO}}(Y).
\end{equation*}
Suppose that $Y$ is spin and $\mathfrak{s}_Y\colon P_{\mathrm{Spin}}(Y)\rightarrow P_{\mathrm{SO}}(Y)$ a spin structure on $Y$. The family $\alpha$ then lifts to a smooth family of diffeomorphisms
\begin{equation*}
\tilde{\alpha}\colon [0,2\pi] \times P_{\mathrm{Spin}}(Y)\longrightarrow P_{\mathrm{Spin}}(Y)
\end{equation*}
because the manifold $P_{\mathrm{Spin}}(Y)$ is a double covering of $P_{\mathrm{SO}}(Y)$. We have $\alpha_0=\mathrm{Id}$ and specify the lift $\tilde{\alpha}$ uniquely by $\tilde{\alpha}_0=\mathrm{Id}$. Since $\alpha_{2\pi}=\mathrm{Id}$ and the bundles are connected manifolds by Convention \ref{convention}, there are two possibilities:
\begin{itemize}
\item $\tilde{\alpha}_{2\pi}=\mathrm{Id}$: the $S^1$-action on $Y$ then lifts to a free $S^1$-action $S^1\times P_{\mathrm{Spin}}(Y)\rightarrow P_{\mathrm{Spin}}(Y)$.
The $S^1$-action commutes with the $\mathrm{Spin}(n+1)$-action and the projection $P_{\mathrm{Spin}}(Y)\rightarrow P_{\mathrm{SO}}(Y)$ is $S^1$-equivariant. In this case the spin structure $\mathfrak{s}_Y$ is called {\em even} (or {\em projectable}).
\item $\tilde{\alpha}_{2\pi}$ is multiplication with $-1\in\mathrm{Spin}(n+1)$ in each fibre: the $S^1$-action on $Y$ then does not lift to an $S^1$-action on $P_{\mathrm{Spin}}(Y)$. In this case the spin structure $\mathfrak{s}_Y$ is called {\em odd} (or {\em non-projectable}).
\end{itemize}
\begin{rem}\label{rem:even odd spin Spin+1xS1, Spincn+1}
In the case of an even spin structure on $Y$ we get a free action of $\mathrm{Spin}(n+1)\times S^1$ on $P_{\mathrm{Spin}}(Y)$. In the case of an odd spin structure on $Y$ the (non-free) action 
\begin{equation*}
\hat{\Phi}\colon S^1\times Y\longrightarrow Y,\quad (e^{it},y)\longmapsto e^{2it}y
\end{equation*}
does lift to an action on $P_{\mathrm{Spin}}(Y)$, which induces a free action of $\mathrm{Spin}^c(n+1)=(\mathrm{Spin}(n+1)\times S^1)/\mathbb{Z}_2$ on $P_{\mathrm{Spin}}(Y)$. 

The fibres of the principal bundle $P_{\mathrm{Spin}}(Y)\rightarrow Y\rightarrow X$ are principal $\mathrm{Spin}(n+1)$-bundles over $S^1$. These fibres are diffeomorphic to $\mathrm{Spin}(n+1)\times S^1$ in the even case (with sections of the $\mathrm{Spin}(n+1)$-bundles over $S^1$ defined by the lift $\tilde{\alpha}$) and to $\mathrm{Spin}^c(n+1)$ in the odd case.
\end{rem}
\begin{defn}
For a Kaluza--Klein circle bundle $Y\rightarrow X$ we denote the sets of isomorphism classes of even and odd spin structures on $Y$ by $\mathcal{S}_{\mathrm{even}}(Y)$ and $\mathcal{S}_{\mathrm{odd}}(Y)$.
\end{defn}
\begin{defn}
For a class $\delta\in H^2(X;\mathbb{Z})$ define
\begin{equation*}
\mathcal{S}^c_\delta(X)=\{\mathfrak{s}^c\in\mathcal{S}^c(X)\mid c_1(L_{\mathfrak{s}^c})=\delta\}.    
\end{equation*}
\end{defn}
\begin{thm}\label{main thm spin spinc KK}
Let $\pi\colon Y\rightarrow X$ be the Kaluza--Klein circle bundle with Euler class $e\in H^2(X;\mathbb{Z})$ and Riemannian metric $g_Y^A$. Assume that $Y$ is spin. Then there exists a map
\begin{equation*}
\Xi\colon\mathcal{S}(Y)\longrightarrow\mathcal{S}(X)\amalg\mathcal{S}^c_e(X)
\end{equation*}
with the following properties:
\begin{enumerate}
\item The restriction of $\Xi$ to $\mathcal{S}_{\mathrm{even}}(Y)$ is a bijection $\Xi_{\mathrm{even}}\colon\mathcal{S}_{\mathrm{even}}(Y)\stackrel{\cong}{\longrightarrow}\mathcal{S}(X)$.
\item The restriction of $\Xi$ to $\mathcal{S}_{\mathrm{odd}}(Y)$ is a surjection $\Xi_{\mathrm{odd}}\colon\mathcal{S}_{\mathrm{odd}}(Y)\longtwoheadrightarrow\mathcal{S}_e^c(X)$.
\end{enumerate}
\end{thm}
\begin{proof}
For a proof in the case of even spin structures on $Y$ see \cite[p.~236--237]{AB} and \cite[Proposition 2.2]{Bor}. For the construction of $\Xi_{\mathrm{odd}}$ see \cite[p.~242]{AB} and \cite[Proposition 2.3]{Bor}. For both cases, see also \cite[p.~24, 27--29]{Sati}, which refers to \cite{AB} and \cite{Mor}.

In \cite[Proposition 3.1]{Mor} it is shown that every element of $\mathcal{S}_e^c(X)$ lifts to an element of $\mathcal{S}(Y)$. However, it is not immediately clear that this right inverse to $\Xi_{\mathrm{odd}}$ can be assumed to have image in the odd spin structures. We therefore prove this fact (compare with Proposition \ref{prop:right inverse Theta}). Let $\mathfrak{s}_X^c$ be a $\mathrm{Spin}^c$-structure on $X$ with $c_1(L_{\mathfrak{s}_X^c})=e$. We first construct a spin structure on the $\mathrm{SO}(n)$-principal bundle $\pi^*P_{\mathrm{SO}}(X)$ over $Y$: The double covering
\begin{equation*}
\tilde{\mathfrak{s}}_X^c\colon P_{\mathrm{Spin}^c}(X)\longrightarrow P_{\mathrm{SO}\times \mathrm{U}(1)}(X)    
\end{equation*}
is equivariant with respect to the homomorphism $(\lambda^c,\chi)$. We restrict the $\mathrm{Spin}^c(n)$-action on $P_{\mathrm{Spin}^c}(X)$ to a free action of $\mathrm{Spin}(n)\cong\mathrm{ker}\,\chi$. The double covering $\tilde{\mathfrak{s}}_X^c$ is then equivariant with respect to
\begin{equation}\label{eqn:lambdac,1}
\left.(\lambda^c,1)\right|_{\mathrm{ker}\,\chi}\colon\mathrm{Spin}(n)\longrightarrow \mathrm{SO}(n)\times \{1\}\cong\mathrm{SO}(n).    
\end{equation}
We consider the pullback bundles
\begin{align*}
\pi^*P_{\mathrm{Spin}^c}(X)&=\{(y,q)\in Y\times P_{\mathrm{Spin}^c}(X)\mid \pi(y)=\pi_{\mathrm{Spin}^c}(q)\}\\
\pi^*P_{\mathrm{SO}\times\mathrm{U}(1)}(X)&=\{(y,p,w)\in Y\times P_{\mathrm{SO}}(X)\times P_{\mathrm{U}(1)}(X)\mid \pi(y)=\pi_{\mathrm{SO}}(p)=\pi_{\mathrm{U}(1)}(w)\}.
\end{align*}
There is an induced, equivariant double covering
\begin{equation*}
\tilde{\mathfrak{t}}\colon \pi^*P_{\mathrm{Spin}^c}(X)\longrightarrow \pi^*P_{\mathrm{SO}\times\mathrm{U}(1)}(X),\quad (y,q)\longmapsto (y,\tilde{\mathfrak{s}}_X^c(q)). 
\end{equation*}
The assumption on the first Chern class of $L_{\mathfrak{s}_X^c}$ implies that $P_{\mathrm{U}(1)}(X)\cong Y$, hence $\pi^*P_{\mathrm{U}(1)}(X)$ is tautologically trivial and there is an isomorphism
\begin{equation*}
\pi^*P_{\mathrm{SO}}(X)\times \mathrm{U}(1)\stackrel{\cong}{\longrightarrow}\pi^*P_{\mathrm{SO}\times\mathrm{U}(1)}(X),\quad (y,p,z)\longmapsto (y,p,zy).    
\end{equation*}
The restriction of $\tilde{\mathfrak{t}}$ to the preimage $P_{\mathrm{Spin}(n)}(Y)$ of $\pi^*P_{\mathrm{SO}}(X)\times \{1\}$ is equivariant with respect to the homomorphism \eqref{eqn:lambdac,1}, i.e.~a spin structure on $\pi^*P_{\mathrm{SO}}(X)$. We then get a spin structure $\mathfrak{s}_Y$ on $Y$ by extending the fibres via
\begin{equation*}
P_{\mathrm{Spin}}(Y)=P_{\mathrm{Spin}(n)}(Y)\times_j\mathrm{Spin}(n+1),\quad
P_{\mathrm{SO}}(Y)\cong (\pi^*P_{\mathrm{SO}}(X))\times_i\mathrm{SO}(n+1)
\end{equation*}
with respect to compatible embeddings
\begin{equation*}
\begin{tikzcd}
\mathrm{Spin}(n)\arrow[hook]{r}{j}\arrow{d}[swap]{\lambda} & \mathrm{Spin}(n+1)\arrow{d}{\lambda}\\
\mathrm{SO}(n)\arrow[hook]{r}[swap]{i} & \mathrm{SO}(n+1)
\end{tikzcd}
\end{equation*}
It remains to show that $\mathfrak{s}_Y$ is odd. The preimage of $x\in X$ under the composition $P_{\mathrm{Spin}(n)}(Y)\rightarrow Y\rightarrow X$ is
\begin{equation*}
\{(y,q)\in \pi^{-1}(x)\times\pi_{\mathrm{Spin}^c}^{-1}(x)\mid \mathrm{pr}_2\circ\tilde{\mathfrak{s}}_X^c(q)=y\}    
\end{equation*}
where $Y=P_{\mathrm{U}(1)}(X)$. Hence we can identify this preimage with
\begin{equation*}
\{(\alpha,g)\in S^1\times\mathrm{Spin}^c(n)\mid \chi(g)=\alpha\}=\mathrm{graph}(\chi)\cong\mathrm{Spin}^c(n).     
\end{equation*}
It follows that the preimage of $x$ under $P_{\mathrm{Spin}}(Y)\rightarrow Y\rightarrow X$ is diffeomorphic to
\begin{equation*}
\mathrm{Spin}^c(n)\times_j\mathrm{Spin}(n+1)\cong\mathrm{Spin}^c(n+1).
\end{equation*}
This implies the claim by Remark \ref{rem:even odd spin Spin+1xS1, Spincn+1}.
\end{proof}
The following corollary will be used in Definition \ref{defn:main construction Y from X spinc} to fix the choice of Kaluza--Klein circle bundles and construct the lift of spinors.
\begin{cor}\label{main cor even odd spin spinc KK bundles}
Let $(X,g_X)$ be an oriented Riemannian manifold.
\begin{enumerate}
\item If $X$ is $\mathrm{Spin}^c$, let $\mathfrak{s}^c_X$ be a $\mathrm{Spin}^c$-structure on $X$ with characteristic line bundle $L$ and $Y\rightarrow X$ the Kaluza--Klein circle bundle with Euler class $e=c_1(L)$ and Riemannian metric $g_Y^A$. Then there exists an odd spin structure $\mathfrak{s}_Y$ on $Y$ such that $\mathfrak{s}^c_X=\Xi(\mathfrak{s}_Y)$.
\item If $X$ is spin, let $\mathfrak{s}_X$ be a spin structure on $X$ and $Y\rightarrow X$ the Kaluza--Klein circle bundle with arbitrary Euler class $e$ and Riemannian metric $g_Y^A$. Then there exists an even spin structure $\mathfrak{s}_Y$ on $Y$ such that $\mathfrak{s}_X=\Xi(\mathfrak{s}_Y)$.
\end{enumerate}
\end{cor}
\begin{proof}
In both situations the manifold $Y$ is spin by Lemma \ref{lem:circle bundle spin}, because $e=c_1(L)\equiv w_2(X)\bmod 2$ in the first case and $w_2(X)=0$ in the second case. The claims are thus direct consequences of the surjectivity of the map $\Xi$ in Theorem \ref{main thm spin spinc KK}.
\end{proof}

\section{Spinors on Kaluza--Klein circle bundles over $4$-manifolds}\label{sect:spinors KK}

Let $(X,g_X)$ be a closed, oriented, Riemannian $4$-manifold. Our goal is to lift spinors for a given $\mathrm{Spin}^c$-structure on $X$ to spinors for a spin structure on a suitable Kaluza--Klein circle bundle $Y$. It turns out that we can lift sections for a $\mathbb{Z}$-family of $\mathrm{Spin}^c$-spinor bundles on $X$ to the same spinor bundle on $Y$. Let $Y^5\rightarrow X^4$ be the Kaluza--Klein circle bundle with Euler class $e$ and Riemannian metric $g_Y^A$ and $E\rightarrow X$ the complex line bundle with $c_1(E)=e$, unique up to isomorphism. According to \cite{AB}, \cite{Amm}, \cite{AmmDiss} there are two cases:
\begin{enumerate}
\item Let $\mathfrak{s}_Y$ be an odd spin structure on $Y$ and $\mathfrak{s}_X^c=\Xi(\mathfrak{s}_Y)$ with characteristic line bundle $E$. Let $S_Y$ and $S_X^c$ denote the corresponding spinor bundles. Then sections of the spinor bundles $S_X^c\otimes E^{\otimes n}$ can be lifted to sections of $S_Y$ for all $n\in\mathbb{Z}$.
\item Let $\mathfrak{s}_Y$ be an even spin structure on $Y$ and $\mathfrak{s}_X=\Xi(\mathfrak{s}_Y)$. Consider the associated $\mathrm{Spin}^c$-structure $\mathfrak{s}^c_{X0}$ and denote the corresponding spinor bundles by $S_Y$ and $S_{X0}^c$. Then sections of the spinor bundles $S_{X0}^c\otimes E^{\otimes n}$ can be lifted to sections of $S_Y$ for all $n\in\mathbb{Z}$.
\end{enumerate}
Together with Corollary \ref{main cor even odd spin spinc KK bundles} this leads to the following construction:
\begin{defn}[Choice of Kaluza--Klein circle bundles]\label{defn:main construction Y from X spinc} 
Let $(X^4,g_X)$ be a closed, oriented, Riemannian $4$-manifold and $\mathfrak{s}_X^c$ a $\mathrm{Spin}^c$-structure on $X$ with characteristic line bundle $L$ and spinor bundle $S_X^c$. We consider two choices for the Kaluza--Klein circle bundle $Y^5\rightarrow X^4$ with Riemannian metric $g_Y=g_Y^A=\pi^*g_X-A\otimes A$, depending on whether the spin structure on $Y$ should be even or odd (see Table \ref{table:choice KK circle bundle}):
\begin{enumerate}
\item {\bf Odd spin structure} on $Y$: For this case the manifold $X$ can be spin or non-spin. Let $\pi\colon Y\rightarrow X$ be the Kaluza--Klein circle bundle with Euler class $e(Y)=c_1(L)$, $A$ a Hermitian connection on $L$ and $\mathfrak{s}_Y$ an odd spin structure on $Y$ with spinor bundle $S_Y$ such that $\mathfrak{s}_X^c=\Xi(\mathfrak{s}_Y)$.
\item {\bf Even spin structure} on $Y$: For this case the manifold $X$ has to be spin. Let $\mathfrak{s}_X$ be one of the spin structures on $X$ and $L^{\scriptscriptstyle\frac{1}{2}}$ the square root of $L$ determined by $\mathfrak{s}_X$. Let $\pi\colon Y\rightarrow X$ be the Kaluza--Klein circle bundle with Euler class $e(Y)=c_1(L^{\scriptscriptstyle\frac{1}{2}})$, $A$ a Hermitian connection on $L^{\scriptscriptstyle\frac{1}{2}}$ and $\mathfrak{s}_Y$ an even spin structure on $Y$  with spinor bundle $S_Y$ such that $\mathfrak{s}_X=\Xi(\mathfrak{s}_Y)$.
\end{enumerate}
In both cases we call the spin structure $\mathfrak{s}_Y$ on $Y$ a {\em lift} of the $\mathrm{Spin}^c$-structure $\mathfrak{s}_X^c$ (the isomorphism class of the lift $\mathfrak{s}_Y$ is not necessarily uniquely determined by $\mathfrak{s}_X^c$). Sections of the $\mathrm{Spin}^c$-spinor bundles $S_X^{c,n}$, with characteristic line bundles $L_n$, lift to sections of the spinor bundle $S_Y$ (see Remark \ref{rem:precise statement lift spinor} below).
\begin{table}
{\footnotesize
\caption{Kaluza--Klein circle bundles $Y^5\rightarrow X^4$ and spinor bundles $S_X^{c,n}$ of charge $q$}
\label{table:choice KK circle bundle}
\renewcommand{\arraystretch}{1.5}
\setlength{\tabcolsep}{0.75em}
\begin{tabular}{lccccccc} 
\hline\noalign{\smallskip}
$X$ & $e(Y)$ & $A$ & $\mathfrak{s}_Y$ & $S_X^{c,n}$ & $L_n$ & $n$ & $q$\\
\noalign{\smallskip}\hline\noalign{\smallskip}
spin or non-spin & $c_1(L)$ & $L$ & odd &  $S_X^c\otimes L^{\otimes n}$ & $L^{\otimes (1+2n)}$ & $n\in\mathbb{Z}$ & $\tfrac{1}{2}+n$ \\
spin & $c_1(L^{\scriptscriptstyle\frac{1}{2}})$ & $L^{\scriptscriptstyle\frac{1}{2}}$ & even &  $S_X^c\otimes (L^{\scriptscriptstyle\frac{1}{2}})^{\otimes n}$ & $L^{\otimes (1+n)}$ & $n\in\mathbb{Z}\setminus\{-1\}$ & $1+n$ \\
\noalign{\smallskip}\hline
\end{tabular}
}
\end{table}
\end{defn}
\begin{rem}
Suppose that $X$ is spin, $\mathfrak{s}_X$ one of the spin structures and $\mathfrak{s}_X^c$ a $\mathrm{Spin}^c$-structure with characteristic line bundle $L$. Then
\begin{equation*}
S_X^c=S_{X0}^c\otimes L^{\scriptscriptstyle\frac{1}{2}}.
\end{equation*}
This is the reason for choosing the circle bundle $Y\rightarrow X$ with Euler class $e=c_1(L^{\scriptscriptstyle\frac{1}{2}})$ in the second case of Definition \ref{defn:main construction Y from X spinc}.
\end{rem}
\begin{rem}\label{rem:charge q}
The numbers $q\in \frac{1}{2}+\mathbb{Z}$ (if $\mathfrak{s}_Y$ is odd) and $q\in\mathbb{Z}$ (if $\mathfrak{s}_Y$ is even) can be thought of as the $\mathrm{U}(1)$-charges of the spinor bundle $S_X^{c,n}$ if the bundles $L$ and $L^{\scriptscriptstyle\frac{1}{2}}$, respectively, correspond to charge $1$. This follows because $S_X^{c,n}=S_{X0}^c\otimes L^{\otimes q}$ in the first case and $S_X^{c,n}=S_{X0}^c\otimes (L^{\scriptscriptstyle\frac{1}{2}})^{\otimes q}$ in the second case if a spin structure $\mathfrak{s}_X$ exists on $X$ (which is always the case locally) (cf.~\cite{W}).
\end{rem}
\begin{rem}
In the case of an even spin structure on $Y$ we exclude the case $n=-1$, corresponding to $q=0$, for reasons explained in Remark \ref{rem:exclude q=0}.
\end{rem}
\begin{rem}\label{rem:precise statement lift spinor}
A more precise statement concerning the lifts of spinors is the following \cite{AB}, \cite{Amm}, \cite{AmmDiss}: For both the case of an odd and even spin structure $\mathfrak{s}_Y$ on $Y$, the $S^1$-action on $Y$ defines a Lie derivative of the Killing vector field $K$ on sections of the spinor bundle $S_Y$, denoted by
\begin{equation*}
L_K\colon\Gamma(S_Y)\longrightarrow \Gamma(S_Y).
\end{equation*}
The space $L^2(S_Y)$ of $L^2$-sections can be decomposed into eigenspaces of $L_K$: 
\begin{enumerate}
\item In the case of an odd spin structure $\mathfrak{s}_Y$ there is a decomposition
\begin{equation*}
L^2(S_Y)=\bigoplus_{k\in\mathbb{Z}}V_{k+\frac{1}{2}},
\end{equation*}
where $V_{k+\frac{1}{2}}\subset L^2(S_Y)$ is the eigenspace of $L_K$ with eigenvalue $i(k+\frac{1}{2})$.
\item In the case of an even spin structure $\mathfrak{s}_Y$ there is a decomposition
\begin{equation*}
L^2(S_Y)=\bigoplus_{k\in\mathbb{Z}}V_k,
\end{equation*}
where $V_k\subset L^2(S_Y)$ is the eigenspace of $L_K$ for the eigenvalue $ik$.
\end{enumerate}
Note that by our assumption that $\dim X=4$ is even, the bundles $S_Y$ and $S_X^{c,n}$ have the same complex rank $4$. For both cases in Definition \ref{defn:main construction Y from X spinc} there exists a canonical bundle map
\begin{equation*}
\Pi_{-q}\colon S_Y\longrightarrow S_X^{c,n}
\end{equation*}
that covers the projection $\pi\colon Y\rightarrow X$ and is fibrewise an isomorphism. It induces an isomorphism of Hilbert spaces
\begin{equation*}
Q_{-q}\colon L^2(S_X^{c,n})\longrightarrow V_{-q}\subset L^2(S_Y),
\end{equation*}
so that the following diagram commutes for each spinor $\phi\in\Gamma(S_X^{c,n})$:
\begin{equation*}
\begin{tikzcd}
Y\ar[r, "Q_{-q}(\phi)"]\ar[d, "\pi"] & S_Y\ar[d, "\Pi_{-q}"]\\
X\ar[r, "\phi"]& S_X^{c,n}
\end{tikzcd}
\end{equation*}
The map $Q_{-q}$ commutes with Clifford multiplication:
\begin{equation}\label{eqn:Q_-q commutes with Clifford}
Q_{-q}(V\cdot\phi)=V^*\cdot Q_{-q}(\phi)
\end{equation}
for all $\phi\in\Gamma(S_X^{c,n})$ and $V\in \mathfrak{X}(X)$ with horizontal lift $V^*\in\mathfrak{X}(Y)$ with respect to the Kaluza--Klein metric $g_Y^A$.
\end{rem}
\begin{defn}
We call the spinor $Q_{-q}(\phi)\in \Gamma(S_Y)$ the {\em lift} of the spinor $\phi\in\Gamma(S_X^{c,n})$. We denote $Q_{-q}$ often by $Q$.
\end{defn}
\begin{rem}
It follows that $Q_{-q}$ is a correspondence between spinors on $X$ of $\mathrm{U}(1)$-charge $q$ and spinors on $Y$ of eigenvalue $-iq$ under $L_K$.
\end{rem}
\begin{rem}
The decomposition 
\begin{equation*}
S_X^c=S_X^{c+}\oplus S_X^{c-}
\end{equation*}
into positive and negative Weyl spinor bundles extends to all twisted spinor bundles $S_X^{c,n}$. Via $Q_{-q}$ we get corresponding decompositions
\begin{equation*}
V_{-q}=V_{-q}^+\oplus V_{-q}^-
\end{equation*}
of spinors in $\Gamma(S_Y)$.
\end{rem}
\begin{lem}\label{lem:relation Clifford 4 and 5d}
Clifford multiplication on the spinor bundles $S_X^{c,n}$ and $S_Y$ is related by 
\begin{align*}
V^*\cdot Q(\phi)&=Q(V\cdot\phi)\\
K\cdot Q(\phi)&=Q(i\mathrm{dvol}_{g_X}\cdot \phi)\\
\pi^*\kappa\cdot Q(\phi)&=Q(\kappa\cdot\phi),
\end{align*}
for all $\phi\in\Gamma(S_X^{c,n})$, where the vector field $V^*\in \mathfrak{X}(Y)$ is the horizontal lift (with respect to the metric $g_Y$) of a vector field $V\in \mathfrak{X}(X)$, $K$ is the vertical unit Killing vector field along the $S^1$-fibres, $\mathrm{dvol}_{g_X}$ denotes the volume form and $\kappa\in\Omega^*(X)$. In particular, Clifford multiplication with $V^*$ exchanges $V_{-q}^\pm$ and with $K$ preserves $V_{-q}^\pm$.
\end{lem}
This follows from equations \eqref{eqn:Clifford mult Delta dim 4 and dim 5} and \eqref{eqn:Q_-q commutes with Clifford} where the orientation of $TY$ is defined by $K$ followed by the orientation of $\pi^*TX$.
\begin{rem}
Analogous equations can be found in \cite[eqns.~(10), (11)]{Mor}, however, in this reference $K\cdot Q(\phi)=-Q(i\mathrm{dvol}_{g_X}\cdot \phi)$, corresponding to the representation of $\mathbb{C}\mathrm{l}(5)$ where $\omega_{\mathbb{C}}$ acts as $-1$.
\end{rem}
\begin{rem}\label{rem:choice of compatible Hermitian bundle metrics}
We choose the invariant scalar products on $\Delta_4\cong\Delta_5\cong\mathbb{C}^4$ and $\mathbb{C}$ such that the bundle maps $\Pi_{-q}$ in Remark \ref{rem:precise statement lift spinor} are isometries on each fibre with respect to the induced bundle metrics. For spinors $\phi$ and $\psi=Q_{-q}(\phi)$, this implies
\begin{equation*}
|\psi|^2=|\phi|^2\circ\pi.
\end{equation*}
\end{rem}

\section{Dirac operators}
Let $(X^4,g_X)$ be a closed, oriented, Riemannian $4$-manifold with $\mathrm{Spin}^c$-structure $\mathfrak{s}_X^c$ and $\pi\colon Y\rightarrow X$ one of the two Kaluza--Klein circle bundles given by Definition \ref{defn:main construction Y from X spinc}. 

\subsection{Clifford identities}
We collect some identities involving Clifford multiplication. Let $\Omega^2_\pm(X,\mathbb{C})$ denote the self-dual and anti-self-dual $2$-forms on $X$, satisfying 
\begin{equation*}
*\omega_\pm=\pm\omega_\pm,\quad\forall \omega_\pm\in \Omega^2_\pm(X),
\end{equation*}
where $*$ is the Hodge star operator. 
\begin{lem}\label{lem:clifford identities} The following identities for Clifford multiplication $\gamma$ hold:
\begin{enumerate}
\item For $\phi_\pm\in \Gamma(S_X^{n,c\pm})$ we have $\mathrm{dvol}_{g_X}\cdot\phi_\pm=\mp \phi_\pm$.
\item For $\omega_\pm\in\Omega^2_\pm(X,\mathbb{C})$ we have
$\mathrm{dvol}_{g_X}\cdot\omega_\pm=\mp\omega_\pm$ and
\begin{equation*}
\gamma(\omega_\pm) \in \Gamma(\mathrm{End}_{0}(S_X^{n,c\pm})),
\end{equation*}
where $\mathrm{End}_{0}(S_X^{n,c\pm})$ denotes the bundle of trace-free endomorphisms of $S_X^{n,c\pm}$. In particular,
\begin{equation*}
\gamma(\omega_-)\equiv 0\quad\text{on $\Gamma(S_X^{n,c+})$}.
\end{equation*}
\end{enumerate}
\end{lem}
\begin{proof}
These identities can be found in references on Seiberg--Witten theory, e.g.~\cite{Morgan}. The Weyl spinor bundles are defined by $\omega_{\mathbb{C}}$ in equation \eqref{eqn:omegaC dim 4}.
\end{proof}
This implies together with Lemma \ref{lem:relation Clifford 4 and 5d}:
\begin{prop}\label{prop:clifford identities S+} Let $\phi\in \Gamma(S_X^{n,c+})$ be a positive Weyl spinor, $\psi=Q(\phi)$ and $\omega\in \Omega^2(X,\mathbb{C})$ with self-dual part $\omega_+$. Then
\begin{align*}
K\cdot\psi&=-i\psi\\
K\cdot \pi^*\omega\cdot\psi&=-iQ(\omega_+\cdot\phi).
\end{align*}
\end{prop}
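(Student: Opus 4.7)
The plan is to deduce both identities directly from the Clifford identities collected in Lemma \ref{lem:clifford identities}, exploiting two facts about positive Weyl spinors: $\mathrm{dvol}_{g_X}$ acts on $S_X^{c+}$ as $-1$, and anti-self-dual $2$-forms annihilate $S_X^{c+}$.

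For the first identity I would simply apply part (c) of Lemma \ref{lem:clifford identities} to obtain $K\cdot\psi=\pi^*(i\,\mathrm{dvol}_{g_X}\cdot\phi)$, and then substitute $\mathrm{dvol}_{g_X}\cdot\phi=-\phi$ from part (b) to conclude $K\cdot\psi=-i\,\pi^*\phi=-i\psi$.

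For the second identity I first upgrade the pullback-compatibility of Clifford multiplication, stated in Lemma \ref{lem:relation Clifford 4 and 5d} only for tangent vectors, to $2$-forms. Writing $\omega$ locally as a sum of simple wedges $V_j^\flat\wedge W_j^\flat$ and using $V_j^*\cdot W_j^*\cdot\pi^*\phi=\pi^*(V_j\cdot W_j\cdot\phi)$, the extension $\pi^*\omega\cdot\psi=\pi^*(\omega\cdot\phi)$ is immediate. Decomposing $\omega=\omega_++\omega_-$ and invoking Lemma \ref{lem:clifford identities}(a) to discard the $\omega_-$ term on $S_X^{c+}$ gives $\pi^*\omega\cdot\psi=\pi^*(\omega_+\cdot\phi)$. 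Since $\gamma(\omega_+)$ preserves $S_X^{c+}$, the spinor $\omega_+\cdot\phi$ is again a positive Weyl spinor, so applying the first identity of the proposition with $\omega_+\cdot\phi$ in place of $\phi$ yields $K\cdot\pi^*(\omega_+\cdot\phi)=-i\,\pi^*(\omega_+\cdot\phi)$, which is exactly the claimed formula.

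The only mild subtlety, and hence the main obstacle, is the pullback-compatibility of Clifford multiplication for $2$-forms, which the excerpt only records at the level of vectors. Once that bookkeeping is in place, the proposition reduces to two routine substitutions, so I would not expect to spend much space on it.
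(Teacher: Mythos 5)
Your proof is correct and follows exactly the route the paper intends (the paper leaves the proposition as an immediate consequence of Lemma \ref{lem:clifford identities}): part (c) combined with part (b) gives the first identity, and the pullback-compatibility of Clifford multiplication extended to $2$-forms, together with part (a) and the fact that $\gamma(\omega_+)$ preserves $S_X^{c+}$, gives the second. The extension to $2$-forms that you flag is indeed the only bookkeeping step, and your treatment of it is adequate.
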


\subsection{Relation between Dirac operators on $X$ and $Y$}
Let $A$ be a Hermitian connection on the Hermitian complex line bundle $L$ or $L^{\scriptscriptstyle\frac{1}{2}}$ according to Table \ref{table:choice KK circle bundle}.
\begin{defn} We denote by $A_n=2qA$ the Hermitian connection on the line bundle $L_n$ induced from $A$.
\end{defn}
Together with the Levi--Civita connection of $g_X$ this defines the Dirac operator
\begin{equation*}
D_{A_n}^X\colon \Gamma(S_X^{c,n\pm})\longrightarrow \Gamma(S_X^{c,n\mp}).
\end{equation*}
The connection $A$ and Riemannian metric $g_X$ define the Kaluza--Klein metric $g_Y=g_Y^A$ on $Y$, that yields the Dirac operator
\begin{equation*}
D^Y\colon \Gamma(S_Y)\longrightarrow \Gamma(S_Y).
\end{equation*}
The curvature $F_A\in\Omega^2(X,i\mathbb{R})$ satisfies $\pi^*F_A=dA$. According to Remark \ref{rem:precise statement lift spinor} there is an isometry
\begin{equation*}
Q=Q_{-q}\colon L^2(S_X^{c,n})\longrightarrow V_{-q}\subset L^2(S_Y).
\end{equation*}
\begin{lem}\label{lem:AB calculation Dirac operator}
The covariant derivatives on the spinor bundles $S_X^{c,n}$ and $S_Y$ are related by:
\begin{align*}
\nabla^Y_{V^*}Q(\phi)&=Q(\nabla_{A_nV}^X\phi)-\frac{1}{4}i\gamma(K)\gamma(\pi^*(i_VF_A))Q(\phi)\\
\nabla^Y_KQ(\phi)&=L_KQ(\phi)-\frac{1}{4}i\gamma(\pi^*F_A)Q(\phi),
\end{align*}
where $V\in TX$ with horizontal lift $V^*\in TY$. This implies for the Dirac operator $D^Y$ the formula
\begin{equation*}
D^YQ(\phi)=Q(D_{A_n}^X\phi)+\gamma(K)L_KQ(\phi)+\frac{1}{4}i\gamma(K)\gamma(\pi^*F_A)Q(\phi).
\end{equation*}
\end{lem}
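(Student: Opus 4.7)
The plan is to derive the formulas by combining the O'Neill-type submersion identities for the Kaluza--Klein metric $g_Y=\pi^*g_X-A\otimes A$ with the principal-bundle construction of the spin structure $\mathfrak{s}_Y$, and then to sum over an adapted orthonormal frame to obtain the Dirac operator formula.

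First I would compute the Levi--Civita connection of $g_Y$ in an adapted frame. Using $A(V^*)=0$, $A(K)=i$, $L_KA=0$, $g_Y(K,K)=1$ and $dA=\pi^*F_A$, the Koszul formula yields, for horizontal lifts $V^*,W^*$ of vector fields on $X$:
\begin{align*}
\nabla^Y_{V^*}W^*&=(\nabla^X_VW)^*+\tfrac{1}{2}\bigl(\text{vertical component of }[V^*,W^*]\bigr),\\
\nabla^Y_{V^*}K=\nabla^Y_KV^*&=\tfrac{1}{2}\bigl(i\,\pi^*(i_VF_A)\bigr)^*,\\
\nabla^Y_KK&=0,
\end{align*}
where the vertical component of $[V^*,W^*]$ is, up to the $i\mathbb{R}$-convention, proportional to $\pi^*F_A(V,W)\cdot K$. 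These are the usual Kaluza--Klein relations expressing the geometry of $g_Y$ in terms of $g_X$ and $F_A$.

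Next I would transfer these metric identities to the spin level via the identification $\Pi_{-q}$ from Proposition \ref{prop:isom Q spin}. The canonical lift of Theorem \ref{thm:main construction Y from X spinc} realises $P_{\mathrm{Spin}}(Y)$ as a twist of $P_{\mathrm{Spin}^c}(X)$ by the connection $A$ on $L^{\frac{1}{2}}$, so under $\lambda_*^{-1}\colon\mathfrak{so}(5)\to\mathfrak{spin}(5)$ the Levi--Civita connection $1$-form of $g_Y$ splits into two contributions. The diagonal blocks reproduce the pullback of the twisted connection $\nabla^X_{A_n}$ on $S_X^c\otimes L^{\frac{1}{2}\otimes n}$ in the horizontal directions and the Lie derivative $L_K$ along the vertical direction. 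The off-diagonal blocks, which come from the $F_A$-mixing of horizontal and vertical components of the Levi--Civita connection, are translated into Clifford elements via the standard rule $\omega\leftrightarrow\tfrac{1}{4}\gamma(\omega)$ for $2$-forms; this produces exactly the correction terms $-\tfrac{1}{4}i\gamma(K)\gamma(\pi^*(i_VF_A))Q(\phi)$ and $-\tfrac{1}{4}i\gamma(\pi^*F_A)Q(\phi)$ in the two displayed identities.

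Finally I would assemble the Dirac operator: pick an orthonormal frame $(V_1,\dots,V_4)$ on $X$, lift horizontally to get $(V_1^*,\dots,V_4^*,K)$ orthonormal on $Y$, and expand
\begin{equation*}
D^YQ(\phi)=\sum_{i=1}^4\gamma(V_i^*)\nabla^Y_{V_i^*}Q(\phi)+\gamma(K)\nabla^Y_KQ(\phi).
\end{equation*}
Substituting the two covariant derivative formulas, using Lemma \ref{lem:relation Clifford 4 and 5d} to convert $\gamma(V_i^*)Q(\phi)=Q(V_i\cdot\phi)$, and applying the standard identity $\sum_i\gamma(V_i)\gamma(i_{V_i}\omega)=2\gamma(\omega)$ for a $2$-form $\omega$, together with the anticommutation of $\gamma(K)$ and the horizontal $\gamma(V_i^*)$, the horizontal $F_A$-terms sum to $\tfrac{1}{2}i\gamma(K)\gamma(\pi^*F_A)Q(\phi)$, which combines with the vertical contribution $-\tfrac{1}{4}i\gamma(K)\gamma(\pi^*F_A)Q(\phi)$ to the stated coefficient $+\tfrac{1}{4}$.

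The main obstacle will be the bookkeeping of signs and numerical factors: $A$ and $F_A$ are $i\mathbb{R}$-valued, the Kaluza--Klein metric carries the minus sign $-A\otimes A$, the spin lift $\lambda_*^{-1}$ introduces the factor $\tfrac{1}{4}$, and one must verify that the horizontal and vertical $F_A$-contributions combine (rather than cancel) to $+\tfrac{1}{4}i\gamma(K)\gamma(\pi^*F_A)$. This sign and factor accounting is the core of the Ammann--Bär computation referenced in the text.
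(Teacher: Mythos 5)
Your proposal is correct and follows essentially the same route as the paper: the paper itself only proves the Dirac-operator formula, by summing over an adapted orthonormal frame, anticommuting $\gamma(K)$ past the horizontal $\gamma(V_i^*)$ and applying the identity $\sum_{l}\gamma(e_l)\gamma(i_{e_l}F_A)=2\gamma(F_A)$ exactly as you do, while deferring the two covariant-derivative formulas to Lemmas 4.3 and 4.4 of \cite{AB}, which is the part you sketch via the O'Neill formulas and the spin lift. Your frame computation correctly produces the coefficient $\tfrac{1}{2}-\tfrac{1}{4}=\tfrac{1}{4}$; the remaining sign and factor bookkeeping you flag (in particular the sign of $\nabla^Y_{V^*}K$ and the mechanism by which the twisted connection $A_n$ with the correct charge emerges from the trivialization of $\pi^*L_n$) is precisely the content of the computations cited from \cite{AB} and \cite{Mor}.
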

For a proof of these formulas see \cite[Lemma 4.3, Lemma 4.4 and the proof of Theorem 4.1]{AB}. The claim for the Dirac operator follows from the formula 
\begin{equation*}
\sum_{l=1}^4\gamma(e_l)\gamma(i_{e_l}F_A)=2\gamma(F_A),
\end{equation*}
where $\{e_l\}_{l=1}^4$ is a local orthonormal frame on $X$. The formula for the covariant derivative on $S_Y$ in the case of $q=\frac{1}{2}$ has also been proved in \cite[Proposition 3.2]{Mor}.

Recall that spinors $\psi\in V_{-q}$ satisfy $L_K\psi=-qi\psi$. This implies (for the second statement we use Proposition \ref{prop:clifford identities S+}):
\begin{prop}\label{prop:formula Dirac DY spin and non-spin}
The restriction
\begin{equation*}
D^Y\colon V_{-q} \longrightarrow V_{-q}
\end{equation*}
is given by
\begin{equation*}
D^Y=Q\circ D_{A_n}^X\circ Q^{-1}-i\gamma(K)\left(q-\frac{1}{4}\gamma(\pi^*F_A)\right).
\end{equation*}
Using $F_{A_n}=2qF_A$ and setting $m=-q$ the restriction
\begin{equation*}
D^Y\colon V_{-q}^+ \longrightarrow V_{-q}
\end{equation*}
for all $q\neq 0$ is given by
\begin{equation*}
D^Y=Q\circ \left(D_{A_n}^X+m-\frac{1}{8m}\gamma(F_{A_n}^+)\right)\circ Q^{-1}.
\end{equation*}
\end{prop}
\begin{rem}
The number $m$ can be interpreted as the mass of the lifted spinor $\psi=Q(\phi)$, cf.~Remark \ref{rem:GN}.
\end{rem}

\section{Lift of the Seiberg--Witten equations}

\subsection{The Seiberg--Witten equations}
Let $(X^4,g_X)$ be a closed, oriented, Riemannian $4$-manifold with a $\mathrm{Spin}^c$-structure $\mathfrak{s}_X^c$ and characteristic line bundle $L$. We consider in this subsection a spinor $\phi\in\Gamma(S_X^{c+})$ and a Hermitian connection $A$ on $L$. The Seiberg--Witten equations \cite{W} for $(A,\phi)$ are
\begin{align*}
D_A^X\phi&=0\\
F_A^+&=\sigma(\phi,\phi).
\end{align*}
We follow the notation in \cite{Ko}. Here $\sigma(\phi,\phi)$ is the imaginary-valued self-dual $2$-form in $\Omega^2_+(X,i\mathbb{R})$ which under the fibrewise isomorphism
\begin{equation}\label{eqn:isom Omega2+ EndSX+}
\gamma\colon \Lambda^2_+T^*X\otimes\mathbb{C}\longrightarrow \mathrm{End}_{0}(S_X^{c+})
\end{equation} 
corresponds to the trace-free endomorphism
\begin{equation*}
\gamma(\sigma(\phi,\phi))=(\phi\otimes \phi^\dagger)_0=\phi\otimes \phi^\dagger-\frac{1}{2}\mathrm{Tr}(\phi\otimes \phi^\dagger)\mathrm{Id}.
\end{equation*}
Using an explicit representation of the spinor space $\Delta_4$ as a module over the Clifford algebra $\mathbb{C}\mathrm{l}(4)$ it can be shown \cite{Morgan} that $\gamma$ maps the real forms $\Omega^2_+(X,\mathbb{R})$ isomorphically onto the skew-Hermitian trace-free endomorphisms of $S_X^{c+}$. The imaginary-valued forms $\Omega^2_+(X,i\mathbb{R})$ thus map isomorphically onto the Hermitian trace-free endomorphisms of $S_X^{c+}$. Hence $\sigma(\phi,\phi)$ is indeed a form in $\Omega^2_+(X,i\mathbb{R})$. We also get:
\begin{lem}\label{lem:gamma(tau) kernel}
Let $p\in X$, $\tau_p\in i\Lambda^2_+T_p^*X$ and $\phi_p\in S_{Xp}^{c+}$. Then the following holds: 
\begin{equation*}
\gamma(\tau_p)\phi_p=0 \Leftrightarrow \tau_p=0\text{ or }\phi_p=0.
\end{equation*}
\end{lem}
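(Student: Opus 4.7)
The plan is to reduce the claim to a pointwise linear-algebra statement using the structural description of $\gamma$ recalled just above the lemma. The $\Leftarrow$ direction is immediate from the $\mathbb{R}$-linearity of $\gamma$, so all the content lies in the $\Rightarrow$ direction, which I would argue by contrapositive: assume $\tau_p\neq 0$ and $\phi_p\neq 0$, and show $\gamma(\tau_p)\phi_p\neq 0$.

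The first step is to invoke the fibrewise isomorphism of \eqref{eqn:isom Omega2+ EndSX+}, together with the subsequent observation that the $i$-twist sends $i\Lambda^2_+T_p^*X$ isomorphically onto the space of Hermitian trace-free endomorphisms of $S_{Xp}^{c+}$. Nonvanishing of $\tau_p$ then gives $\gamma(\tau_p)\neq 0$ as a Hermitian trace-free endomorphism. The second step uses that the positive Weyl spinor space $S_{Xp}^{c+}$ has complex dimension $2$: any Hermitian trace-free endomorphism of $\mathbb{C}^2$ is unitarily conjugate to $\operatorname{diag}(\lambda,-\lambda)$ for some $\lambda\in\mathbb{R}$, and hence has determinant $-\lambda^2$. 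Since $\gamma(\tau_p)\neq 0$ forces $\lambda\neq 0$, the endomorphism $\gamma(\tau_p)$ is invertible, so $\gamma(\tau_p)\phi_p=0$ together with $\phi_p\neq 0$ is impossible. This is exactly the contrapositive of what we need.

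There is essentially no obstacle: the lemma is a pointwise algebraic consequence of the structure theorem for $\gamma$ cited from \cite{Morgan}. The only subtleties worth flagging are the bookkeeping of the factor $i$ (which is what places $\gamma(\tau_p)$ on the Hermitian, rather than skew-Hermitian, side of the isomorphism) and the essential use of $\dim_{\mathbb{C}}S_{Xp}^{c+}=2$, without which a Hermitian trace-free endomorphism could have a nontrivial kernel.
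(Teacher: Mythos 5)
Your proof is correct, but it takes a genuinely different route from the paper's. The paper's own argument is a one-line appeal to the Clifford identity $\gamma(\tau_p)\gamma(\tau_p)\phi_p=2|\tau_p|^2\phi_p$ on $S_{Xp}^{c+}$: applying $\gamma(\tau_p)$ once more to $\gamma(\tau_p)\phi_p=0$ yields $2|\tau_p|^2\phi_p=0$, whence $\tau_p=0$ or $\phi_p=0$. You instead establish invertibility of $\gamma(\tau_p)$ for $\tau_p\neq 0$ by combining the fibrewise isomorphism \eqref{eqn:isom Omega2+ EndSX+} (restricted to $i\Lambda^2_+T_p^*X$, landing in the Hermitian trace-free endomorphisms) with the fact that $\dim_{\mathbb{C}}S_{Xp}^{c+}=2$, so that a nonzero Hermitian trace-free endomorphism diagonalizes as $\mathrm{diag}(\lambda,-\lambda)$ with $\lambda\neq 0$ and hence has nonzero determinant. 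The two arguments encode the same linear algebra -- the quadratic identity is precisely the statement that $\gamma(\tau_p)$ has eigenvalues $\pm\sqrt{2}\,|\tau_p|$ -- but the paper's version needs only the single displayed formula (including its normalization constant), whereas yours requires the full characterization of the image of $\gamma$ while making the mechanism (invertibility) explicit and remaining insensitive to the precise constant. Both inputs are quoted from \cite{Morgan} in the text immediately preceding the lemma, so either proof is admissible; you correctly flag the role of the factor $i$ and of the rank-two hypothesis, which are indeed the only delicate points.
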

\begin{proof} This follows from the formula
\begin{equation*}
\gamma(\tau_p)\gamma(\tau_p)\phi_p=2|\tau_p|^2\phi_p,
\end{equation*}
that holds for all $\tau_p,\phi_p$ as in the statement of the lemma.
\end{proof}
\begin{lem}\label{lem:sigma(phi,phi)phi}
For $\phi\in \Gamma(S_X^{c+})$ we have 
\begin{equation*}
\gamma(\sigma(\phi,\phi))\phi=\tfrac{1}{2}|\phi|^2\phi.
\end{equation*}
\end{lem}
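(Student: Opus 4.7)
The plan is to apply the definition of $\gamma(\sigma(\phi,\phi))$ directly and compute. Recall from the paragraph preceding the lemma that
\[
\gamma(\sigma(\phi,\phi)) = \phi \otimes \phi^\dagger - \tfrac{1}{2}\mathrm{Tr}(\phi\otimes\phi^\dagger)\,\mathrm{Id},
\]
so evaluating both sides on $\phi$ reduces the problem to computing the action of the rank-one endomorphism $\phi \otimes \phi^\dagger$ on $\phi$ and the trace of $\phi \otimes \phi^\dagger$.

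First I would fix a point $p \in X$ and work fibrewise in $S_{Xp}^{c+}$ with the Hermitian inner product. Under the convention $\phi^\dagger(\psi) = \langle \psi, \phi\rangle$, the endomorphism $\phi \otimes \phi^\dagger$ sends any $\psi$ to $\langle \psi, \phi\rangle\, \phi$. Applying this to $\psi = \phi$ gives $(\phi \otimes \phi^\dagger)\phi = |\phi|^2 \phi$. Next I would compute the trace: in any orthonormal basis $\{e_1, e_2\}$ of $S_{Xp}^{c+}$, one has $\mathrm{Tr}(\phi \otimes \phi^\dagger) = \sum_j \langle e_j, \phi\rangle\langle \phi, e_j\rangle = |\phi|^2$.

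Combining these two identities gives
\[
\gamma(\sigma(\phi,\phi))\phi = |\phi|^2\phi - \tfrac{1}{2}|\phi|^2 \phi = \tfrac{1}{2}|\phi|^2\phi,
\]
which is the claim. There is no real obstacle here: everything is a direct consequence of the explicit formula for $\gamma(\sigma(\phi,\phi))$ as the trace-free part of the rank-one operator $\phi \otimes \phi^\dagger$. The only subtlety is fixing the sign/conjugation convention for $\phi^\dagger$ to be consistent with the isomorphism $\gamma\colon\Omega^2_+(X,i\mathbb{R}) \to \mathrm{End}_0^{\mathrm{Herm}}(S_X^{c+})$ mentioned before the lemma, but once that is fixed the computation is a single line.
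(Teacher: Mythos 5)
Your proof is correct and is essentially the same computation as the paper's: the paper simply writes $(\phi\otimes\phi^\dagger)_0$ as an explicit $2\times 2$ matrix in a local orthonormal frame $\phi=(\alpha,\beta)$ and applies it to $\phi$, which is exactly your identity $(\phi\otimes\phi^\dagger)\phi-\tfrac{1}{2}\mathrm{Tr}(\phi\otimes\phi^\dagger)\phi=\tfrac{1}{2}|\phi|^2\phi$ in coordinates.
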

\begin{proof}
With respect to a local orthonormal frame for $S_X^{c+}$ we can write (see \cite{Ko})
\begin{equation}\label{eqn:Clifford action sigma phi phi}
\phi=\left(\begin{array}{c} \alpha \\ \beta  \end{array}\right)\in \mathbb{C}^2,\quad \gamma(\sigma(\phi,\phi))=\left(\begin{array}{cc} \tfrac{1}{2}(|\alpha|^2-|\beta|^2) & \alpha\bar{\beta} \\ \bar{\alpha}\beta & \tfrac{1}{2}(|\beta|^2-|\alpha|^2)  \end{array}\right),
\end{equation}
hence the action of $\sigma(\phi,\phi)$ on $\phi$ is given by
\begin{align*}
\gamma(\sigma(\phi,\phi))\phi&=\left(\begin{array}{cc} \tfrac{1}{2}(|\alpha|^2-|\beta|^2) & \alpha\bar{\beta} \\ \bar{\alpha}\beta & \tfrac{1}{2}(|\beta|^2-|\alpha|^2)  \end{array}\right)\left(\begin{array}{c} \alpha \\ \beta  \end{array}\right)\\
&=\tfrac{1}{2}(|\alpha|^2+|\beta|^2)\left(\begin{array}{c} \alpha \\ \beta  \end{array}\right).
\end{align*}
\end{proof}
One often considers the more general Seiberg--Witten equations
\begin{align*}
D_A^X\phi&=0\\
F_A^+&=\sigma(\phi,\phi)+\mu,
\end{align*}
where $\mu\in \Omega^2_+(X,i\mathbb{R})$ is an arbitrary perturbation.
\begin{prop}\label{prop:equiv SW multiplied phi}
If $(A,\phi)$ satisfy
\begin{equation}\label{eqn:SW FA omega}
F_A^+=\sigma(\phi,\phi)+\mu,
\end{equation}
then
\begin{equation}\label{eqn:SW FA omega applied phi}
\gamma(F_A^+)\phi=\left(\tfrac{1}{2}|\phi|^2+\gamma(\mu)\right)\phi.
\end{equation}
Conversely suppose that $(A,\phi)$ satisfy equation \eqref{eqn:SW FA omega applied phi} and
\begin{equation*}
D_A^X\phi=0.
\end{equation*}
If $\phi$ does not vanish identically on $X$, then $(A,\phi)$ satisfy equation \eqref{eqn:SW FA omega}.
\end{prop}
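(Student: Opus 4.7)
The forward direction is a direct application of Lemma \ref{lem:sigma(phi,phi)phi}: applying the isomorphism $\gamma$ from \eqref{eqn:isom Omega2+ EndSX+} to both sides of \eqref{eqn:SW FA omega} and evaluating the resulting endomorphisms on $\phi$, the term $\gamma(\sigma(\phi,\phi))\phi$ reduces to $\tfrac{1}{2}|\phi|^2\phi$, which gives \eqref{eqn:SW FA omega applied phi} at once.

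For the converse, I plan to work with the defect form
\begin{equation*}
\tau=F_A^+-\sigma(\phi,\phi)-\mu\in \Omega^2_+(X,i\mathbb{R}).
\end{equation*}
Using Lemma \ref{lem:sigma(phi,phi)phi} in reverse, equation \eqref{eqn:SW FA omega applied phi} rearranges to $\gamma(\tau)\phi=0$ pointwise on $X$. By Lemma \ref{lem:gamma(tau) kernel}, this forces, at each $p\in X$, either $\tau_p=0$ or $\phi_p=0$. The goal is therefore to show that the zero set of $\phi$ cannot prevent $\tau$ from vanishing identically.

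The key input here is the Dirac equation $D_A^X\phi=0$, which I have not yet used. The main obstacle is to control the zero set $Z=\{p\in X\mid \phi_p=0\}$ of a nontrivial solution to a generalized Dirac equation. I plan to invoke B\"ar's unique continuation theorem for harmonic spinors of generalized Dirac operators (the same result cited in the introduction, reference \cite{Baer}), which asserts that if $\phi\not\equiv 0$ then $Z$ has empty interior; in particular $X\setminus Z$ is dense in $X$. On this dense open set $\tau$ must vanish, and since $\tau$ is a smooth section of $i\Lambda^2_+T^*X$, continuity forces $\tau\equiv 0$ on all of $X$, which is exactly equation \eqref{eqn:SW FA omega}.

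In summary, the proof is a short pointwise linear-algebra argument (Lemmas \ref{lem:gamma(tau) kernel} and \ref{lem:sigma(phi,phi)phi}) bracketed by an analytic density statement for the non-vanishing locus of $\phi$; the only delicate ingredient is the unique continuation property, and no further calculation with the curvature or the spinor representation should be needed.
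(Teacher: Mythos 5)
Your proposal is correct and follows essentially the same route as the paper: forward direction via Lemma \ref{lem:sigma(phi,phi)phi}, then the pointwise dichotomy from Lemma \ref{lem:gamma(tau) kernel} combined with B\"ar's nodal-set theorem to see that the non-vanishing locus of $\phi$ is dense, and continuity of the defect form to conclude. The paper phrases this as a contradiction (an open set where $\tau\neq 0$ would force $\phi$ to vanish on an open set, contradicting the Hausdorff-dimension bound on the nodal set), but the content is identical.
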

\begin{proof}
The first claim is immediate by Lemma \ref{lem:sigma(phi,phi)phi}. For the converse, equation \eqref{eqn:SW FA omega applied phi} implies
\begin{equation*}
\gamma(F_A^+-\sigma(\phi,\phi)-\mu)\phi=0.
\end{equation*}
Lemma \ref{lem:gamma(tau) kernel} implies that equation \eqref{eqn:SW FA omega} holds in all points $p\in X$ where $\phi_p$ is non-zero. Suppose that $F_A^+-\sigma(\phi,\phi)-\mu$ is non-zero in some point $p\in X$. Then by continuity it is non-zero also in a small open neighbourhood $U$ of $p$. Hence $\phi$ has to vanish on $U$. Since $\phi$ is in the kernel of $D_A^X$, the unique continuation property of Dirac operators \cite{Aro, Boo, Baer} and the assumption that $X$ is connected by Convention \ref{convention} imply that $\phi$ vanishes identically on $X$, contradicting the assumption. Therefore equation \eqref{eqn:SW FA omega} holds in all points of $X$.
\end{proof}
Recall that a pair $(A,\phi)$ is called irreducible if the spinor $\phi$ does not vanish identically on $X$. Similarly we define for the Kaluza--Klein circle bundle $Y\rightarrow X$:
\begin{defn}
A spinor $\psi\in\Gamma(S_Y)$ is called {\em irreducible} if 
$\psi$ does not vanish identically on $Y$.
\end{defn}

\subsection{Main theorem}
We can now state our main theorem.
\begin{thm}\label{thm:main thm all n}
Let $(X^4,g_X)$ be a closed, oriented, Riemannian $4$-manifold and $\mathfrak{s}^c_X$ a $\mathrm{Spin}^c$-structure on $X$ with characteristic line bundle $L$. With the notations from Definition \ref{defn:main construction Y from X spinc} let $\pi\colon Y\rightarrow X$ be one of the two principal circle bundles with Euler class $e(Y)$ and $\mathfrak{s}_Y$ a lift of $\mathfrak{s}_X^c$ to a spin structure on $Y$. We define a map
\begin{equation*}
(A_n,\phi,g_X,\mu)\stackrel{\pi^*}{\longmapsto} (\psi,g_Y^A,\pi^*\mu)
\end{equation*}
for a positive Weyl spinor $\phi\in\Gamma(S_X^{c,n+})$, Hermitian connection $A_n$ on $L_n$ and perturbation $\mu\in\Omega^2_+(X,i\mathbb{R})$
as follows:
\begin{itemize}
\item $\psi=Q_{-q}(\phi)\in\Gamma(S_Y)$ is the lift of $\phi$
\item $g_Y^A$ is the Kaluza--Klein metric on $Y$ determined by $g_X$ and the connection $A=\frac{1}{2q}A_n$ on $Y$ according to Definition \ref{defn:KK metric}.
\end{itemize}
Let $m=-q\neq 0$. Then the following holds for $(\psi,g_Y^A,\pi^*\mu)=\pi^*(A_n,\phi,g_X,\mu)$:
\begin{enumerate}
\item If $(A_n,\phi)$ is a solution of the Seiberg--Witten equations
\begin{equation}\label{eqn:main thm SW eqns}
\begin{split}
D_{A_n}^X\phi&=0\\
F_{A_n}^+&=\sigma(\phi,\phi)+\mu
\end{split}
\end{equation}
for parameters $(g_X,\mu)$, then $\psi$ is a solution of the equation
\begin{equation}\label{eqn:DY equivalent SW eqns}
D^Y\psi=-\frac{1}{16m}|\psi|^2\psi+m\psi-\frac{1}{8m}\gamma(\pi^*\mu)\psi
\end{equation}
for parameters $(g_Y^A,\pi^*\mu)$.
\item Conversely, if $\psi$ is an irreducible solution of equation \eqref{eqn:DY equivalent SW eqns} for parameters $(g_Y^A,\pi^*\mu)$, then $(A_n,\phi)$ is an irreducible solution of equations \eqref{eqn:main thm SW eqns} for parameters $(g_X,\mu)$. 
\end{enumerate}
\end{thm}
\begin{proof}
If $(A_n,\phi)$ satisfy the Seiberg--Witten equations \eqref{eqn:main thm SW eqns}, then by Proposition \ref{prop:equiv SW multiplied phi}
\begin{equation*}
\gamma(F_{A_n}^+)\phi=\left(\frac{1}{2}|\phi|^2+\gamma(\mu)\right)\phi.
\end{equation*}
Hence by Proposition \ref{prop:formula Dirac DY spin and non-spin} and Remark \ref{rem:choice of compatible Hermitian bundle metrics}
\begin{equation}\label{eqn:DY proof main thm}
\begin{split}
D^Y\psi&=Q\circ \left(D_{A_n}^X+m-\frac{1}{8m}\gamma(F_{A_n}^+)\right)\phi\\
&=m\psi-\frac{1}{16m}|\psi|^2\psi-\frac{1}{8m}\gamma(\pi^*\mu)\psi.
\end{split}
\end{equation}
The converse follows, because the term in the second line of equation \eqref{eqn:DY proof main thm} is an element of $Q(\Gamma(S_X^{c,n+}))$, hence together with
\begin{equation*}
D_{A_n}^X\phi\in \Gamma(S_X^{c,n-})\quad\text{and}\quad \left(m-\frac{1}{8m}\gamma(F_{A_n}^+)\right)\phi\in\Gamma(S_X^{c,n+})
\end{equation*}
we get
\begin{align*}
D_{A_n}^X\phi&=0\\
\gamma(F_{A_n}^+)\phi&=\left(\frac{1}{2}|\phi|^2+\gamma(\mu)\right)\phi.
\end{align*}
Under the assumption that $\phi\neq 0$ somewhere on $X$, Proposition \ref{prop:equiv SW multiplied phi} implies the Seiberg--Witten equations \eqref{eqn:main thm SW eqns}.
\end{proof}
\begin{rem}
The spinor $\psi\equiv 0$ is always a solution of equation \eqref{eqn:DY equivalent SW eqns}. The corresponding pair $(A_n,0)$ is a solution of equations \eqref{eqn:main thm SW eqns} only if $F_{A_n}^+=\mu$.
\end{rem}
\begin{rem}
Theorem \ref{thm:main thm n=1} is the special case of Theorem \ref{thm:main thm all n} for odd spin structure $\mathfrak{s}_Y$ on $Y$, $q=\frac{1}{2}$ and $\mu=0$.
\end{rem}
\begin{rem}\label{rem:exclude q=0}
Equation \eqref{eqn:DY equivalent SW eqns} does not make sense if $m=-q=0$, hence we exclude the case $n=-1$ for an even spin structure $\mathfrak{s}_Y$ on $Y$.
\end{rem}
\begin{rem}\label{rem:depend sign ident Cliff 4 5 dim}
Equation \eqref{eqn:DY equivalent SW eqns} depends on the relation between Clifford multiplication on $X$ and $Y$, cf.~equation \eqref{eqn:Clifford mult Delta dim 4 and dim 5} and Lemma \ref{lem:relation Clifford 4 and 5d}. If the other choice with $K\cdot Q(\phi)=-Q(i\mathrm{dvol}_{g_X}\cdot \phi)$ is made, then all terms on the right hand side of equation \eqref{eqn:DY equivalent SW eqns} change sign:
\begin{equation*}
D^Y\psi=\frac{1}{16m}|\psi|^2\psi-m\psi+\frac{1}{8m}\gamma(\pi^*\mu)\psi.
\end{equation*}
\end{rem}

\section{Invariance under gauge transformations and charge conjugation}\label{sect:gauge transform charge conj}

\subsection{Gauge transformations}
For a smooth function $h\colon X\rightarrow S^1$ let
\begin{equation*}
d\theta_h=h^{-1}\cdot dh\in\Omega^1(X,i\mathbb{R}).    
\end{equation*}
Consider the gauge transformation $(A_n,\phi)\mapsto (A_n^h,\phi^h)$ given by Table \ref{table:gauge transformations}.
\begin{table}
{\footnotesize
\caption{Gauge transformations defined by $h\colon X\rightarrow S^1$}
\label{table:gauge transformations}
\renewcommand{\arraystretch}{1.5}
\setlength{\tabcolsep}{0.75em}
\begin{tabular}{lccccccc} 
\hline\noalign{\smallskip}
$X$ & $A$ & $S_X^{c,n}$ & $L_n$ & $A^h$ & $A_n^h$ & $\phi^h$ & $q$\\
\noalign{\smallskip}\hline\noalign{\smallskip}
spin or non-spin & $L$ & $S_X^c\otimes L^{\otimes n}$ & $L^{\otimes (1+2n)}$ & $A+2\pi^*d\theta_h$ & $2qA^h$ & $h^{-2q}\phi$ & $\tfrac{1}{2}+n$ \\
spin & $L^{\scriptscriptstyle\frac{1}{2}}$ &  $S_X^c\otimes (L^{\scriptscriptstyle\frac{1}{2}})^{\otimes n}$ & $L^{\otimes (1+n)}$ & $A+\pi^*d\theta_h$ & $2qA^h$ & $h^{-q}\phi$ & $1+n$ \\
\noalign{\smallskip}\hline
\end{tabular}
}
\end{table}
In the situation of Theorem \ref{thm:main thm all n}, if $(A_n,\phi)$ is a solution to the Seiberg--Witten equations \eqref{eqn:main thm SW eqns}, then so is $(A_n^h,\phi^h)$ for every smooth map $h\colon X\rightarrow S^1$.

On the principal circle bundle $\pi\colon Y\rightarrow X$ define 
\begin{align*}
g_Y^h&=\pi^*g_X-A^h\otimes A^h\\
\psi^h&=Q_{-q}(\phi^h)
\end{align*}
and the Dirac operator $D^{Y,h}$ associated to the spin structure $\mathfrak{s}_Y$ and Riemannian metric $g_Y^h$.
We get:
\begin{prop}
With the notation from Theorem \ref{thm:main thm all n} let
\begin{align*}
(\psi,g_Y,\pi^*\mu)&=\pi^*(A_n,\phi,g_X,\mu)\\
(\psi^h,g_Y^h,\pi^*\mu)&=\pi^*(A_n^h,\phi^h,g_X,\mu)
\end{align*}
and $m=-q$. If $\psi$ is a solution to the equation
\begin{equation*}
D^Y\psi=-\frac{1}{16m}|\psi|^2\psi+m\psi-\frac{1}{8m}\gamma(\pi^*\mu)\psi
\end{equation*}
with parameters $(g_Y,\pi^*\mu)$, then $\psi^h$ is a solution to the equation
\begin{equation*}
D^{Y,h}\psi^h=-\frac{1}{16m}|\psi^h|^2\psi^h+m\psi^h-\frac{1}{8m}\gamma(\pi^*\mu)\psi^h
\end{equation*}
with parameters $(g^h_Y,\pi^*\mu)$ for every smooth map $h\colon X\rightarrow S^1$.
\end{prop}
\begin{proof}
If $\psi\equiv 0$, the statement is clear. If $\psi$ does not vanish identically on $Y$ we first apply the reverse implication of Theorem \ref{thm:main thm all n} to show that $(A_n,\phi)$ is a solution of the Seiberg--Witten equations for parameters $(g_X,\mu)$. Applying the gauge transformation and again Theorem \ref{thm:main thm all n} the claim follows. 
\end{proof}

\subsection{Charge conjugation}
Charge conjugation is an involution $\mathfrak{s}_X^c\mapsto \bar{\mathfrak{s}}_X^c$ that acts on the spinor bundle by complex conjugation,
\begin{equation*}
S_X^{c,n \pm}\longmapsto \bar{S}_X^{c,n \pm},\quad \phi\longmapsto \bar{\phi},
\end{equation*}
which is a complex anti-linear isomorphism of Clifford modules. It also maps
\begin{equation*}
\begin{array}{ccccl}
L_n&\longmapsto& \bar{L}_n& = &L_n^{-1}\\
A_n&\longmapsto & \bar{A}_n & = & -A_n\\
F_{A_n}&\longmapsto & F_{\bar{A}_n} & = & -F_{A_n}\\
\mu&\longmapsto & \bar{\mu} & = & -\mu.
\end{array}
\end{equation*}
The corresponding map on the charge $q$ is given by
\begin{equation*}
q\longmapsto -q,    
\end{equation*}
where the bundles $L$ and $L^{\scriptscriptstyle\frac{1}{2}}$, respectively, still have charge $1$, cf.~Remark \ref{rem:charge q}. 

For the lift of the charge conjugated pair $(\bar{A}_n,\bar{\phi})$ the data for the Kaluza--Klein circle bundle $(Y,A,g_Y,\mathfrak{s}_Y)$ stay the same and
\begin{equation*}
\begin{array}{rcl}
\psi=Q_{-q}(\phi)\in V_{-q}&\longmapsto& \bar{\psi}=Q_q(\bar{\phi})\in V_q\\
\pi^*\mu&\longmapsto &\pi^*\bar{\mu}\\
m&\longmapsto&-m.
\end{array}
\end{equation*}
Charge conjugation maps solutions $(A_n,\phi)$ to the Seiberg--Witten equations with parameters $(g_X,\mu)$ to solutions $(\bar{A}_n,\bar{\phi})$ with parameters $(g_X,\bar{\mu})$. This implies:
\begin{prop}
With the notation from Theorem \ref{thm:main thm all n} let
\begin{align*}
(\psi,g_Y^A,\pi^*\mu)&=\pi^*(A_n,\phi,g_X,\mu)\\
(\bar{\psi},g_Y^A,\pi^*\bar{\mu})&=\pi^*(\bar{A}_n,\bar{\phi},g_X,\bar{\mu})
\end{align*}
and $m=-q$. If $\psi$ is a solution to the equation
\begin{equation*}
D^Y\psi=-\frac{1}{16m}|\psi|^2\psi+m\psi-\frac{1}{8m}\gamma(\pi^*\mu)\psi
\end{equation*}
with parameters $(g_Y^A,\pi^*\mu)$, then $\bar{\psi}$ is a solution to the equation
\begin{equation*}
D^Y\bar{\psi}=\frac{1}{16m}|\bar{\psi}|^2\bar{\psi}-m\bar{\psi}+\frac{1}{8m}\gamma(\pi^*\bar{\mu})\bar{\psi}
\end{equation*}
with parameters $(g_Y^A,\pi^*\bar{\mu})$.
\end{prop}

\section{Kaluza--Klein circle bundles with fibres of length $2\pi r$}\label{sect:KK bundles fibres length 2pir}
The Kaluza--Klein metric can be generalized as follows: Let $r\colon X\rightarrow \mathbb{R}^+$ be a smooth positive function on $X$ and $\hat{r}=r\circ\pi$. Then
\begin{equation}\label{eqn:KK metric gen r}
g_Y=\pi^*g_X-\hat{r}^2A\otimes A
\end{equation}
is again a Riemannian metric on $Y$. The only difference to the original metric is that
\begin{equation*}
g_Y(K,K)=\hat{r}^2,
\end{equation*}
i.e.~the circle fibres over a point $x\in X$ have length $2\pi r(x)$. In the physics literature $r$ is written as $r=e^{\varphi}$ and the scalar field $\varphi\colon X\rightarrow \mathbb{R}$ is sometimes called the dilaton.

The vector field $\frac{1}{\hat{r}}K$ has unit length, hence the relation in Lemma \ref{lem:relation Clifford 4 and 5d} between Clifford multiplication on the spinor bundles is given by
\begin{align*}
V^*\cdot Q(\phi)&=Q(V\cdot\phi)\\
\tfrac{1}{\hat{r}}K\cdot Q(\phi)&=Q(i\mathrm{dvol}_{g_X}\cdot \phi).
\end{align*}
The formulas in Proposition \ref{prop:clifford identities S+} change accordingly to
\begin{align*}
\tfrac{1}{\hat{r}}K\cdot\psi&=-i\psi\\
\tfrac{1}{\hat{r}}K\cdot\pi^*\omega\cdot\psi&=-iQ(\omega_+\cdot\phi).
\end{align*}
The formula in Proposition \ref{prop:formula Dirac DY spin and non-spin} for the Dirac operator $D^Y$ then becomes (see \cite{AB} for the case of constant $r$ and \cite{Amm} for the general case)
\begin{align*}
D^Y&=Q\circ D_{A_n}^X\circ Q^{-1}-i\gamma\left(\tfrac{1}{\hat{r}}K\right)\left(\tfrac{1}{\hat{r}}q-\tfrac{1}{4}\hat{r}\gamma(\pi^*F_A)\right)\\
&=Q\circ \left(D_{A_n}^X-\frac{1}{\hat{r}}q+\frac{1}{8q}\hat{r}\gamma(F_{A_n}^+)\right)\circ Q^{-1},
\end{align*}
with $q$ as before. We then get:
\begin{cor}\label{cor:main cor all n all r}
Consider the Kaluza--Klein metric given by equation \eqref{eqn:KK metric gen r} with a smooth positive function $r\colon X\rightarrow \mathbb{R}$ and $\hat{r}=r\circ\pi$. Define a function $m_r\colon Y\rightarrow\mathbb{R}$ by $m_r=-\frac{q}{\hat{r}}$, where $q$ is the $\mathrm{U}(1)$-charge as before. Then $m_r$ is nowhere zero on $X$ (since $q\neq 0$ by assumption) and the statements in Theorem \ref{thm:main thm all n} and equation \eqref{eqn:DY equivalent SW eqns} continue to hold with $m$ replaced by $m_r$:
\begin{equation}\label{eqn:DY equivalent SW eqns gen r}
D^Y\psi=-\frac{1}{16m_r}|\psi|^2\psi+m_r\psi-\frac{1}{8m_r}\gamma(\pi^*\mu)\psi.
\end{equation}
\end{cor}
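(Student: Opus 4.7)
The plan is to reduce to Theorem \ref{thm:main thm all n} by recognizing that the displayed formula for $D^Y$ immediately preceding the corollary is structurally identical to the one in Corollary \ref{cor:formula Dirac DY spin and non-spin}, once the constant $m=-q$ is replaced by the smooth function $m_r = -q/\hat{r}$. The short algebraic identity $-1/(8m_r) = \hat{r}/(8q)$ allows us to rewrite that formula as
\begin{equation*}
D^Y = Q \circ \Bigl(D_{A_n}^X + m_r - \tfrac{1}{8m_r}\gamma(F_{A_n}^+)\Bigr) \circ Q^{-1},
\end{equation*}
which is formally the same expression as in the constant-$m$ case. Since $q\neq 0$ and $r>0$, the function $m_r$ is nowhere zero, so this expression is well-defined on all of $Y$.

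With this observation the forward implication copies the proof of Theorem \ref{thm:main thm all n}: assuming the Seiberg--Witten system \eqref{eqn:main thm SW eqns}, Proposition \ref{prop:equiv SW multiplied phi} gives both $D_{A_n}^X\phi = 0$ and $\gamma(F_{A_n}^+)\phi = \tfrac{1}{2}|\phi|^2\phi + \gamma(\mu)\phi$. Substituting into the rewritten Dirac formula and applying $Q$, which commutes with multiplication by pullbacks of scalar functions and with Clifford multiplication by pullbacks of $2$-forms, together with $|\psi|^2 = |\phi|^2 \circ \pi$ from Remark \ref{rem:choice of compatible Hermitian bundle metrics}, produces exactly equation \eqref{eqn:DY equivalent SW eqns gen r}. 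For the converse, starting from \eqref{eqn:DY equivalent SW eqns gen r} and applying $Q^{-1}$ via the same Dirac formula yields a pointwise identity on $\phi$ whose $\Gamma(S_X^{c-})$ component is $D_{A_n}^X \phi$ alone, while every remaining term lies in $\Gamma(S_X^{c+})$ by Remark \ref{rem:Dirac operator DY DXA positive negative spinors} and Lemma \ref{lem:clifford identities}. Matching Weyl components forces $D_{A_n}^X \phi = 0$ together with $\gamma(F_{A_n}^+)\phi = \tfrac{1}{2}|\phi|^2\phi + \gamma(\mu)\phi$; since $Q$ is a fibrewise isomorphism, $\psi\not\equiv 0$ is equivalent to $\phi\not\equiv 0$, and the non-trivial half of Proposition \ref{prop:equiv SW multiplied phi} then recovers the full Seiberg--Witten system.

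The essentially only point where the variable-$r$ case differs conceptually from Theorem \ref{thm:main thm all n}, and hence the main (mild) obstacle worth checking, is verifying that treating $m_r$ as a smooth function rather than a constant creates no issues. Because $m_r$ is the pullback of a scalar function on $X$, it commutes with $Q$, with pointwise Clifford operations, and with the pointwise algebraic identities behind Proposition \ref{prop:equiv SW multiplied phi}; moreover no derivative of $m_r$ ever enters, since $m_r$ appears only as a multiplier outside $D_{A_n}^X$. Once this bookkeeping is checked, the proof reduces to a line-by-line transcription of the constant-$m$ argument.
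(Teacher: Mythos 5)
Your proposal is correct and follows exactly the route the paper intends: the paper gives no separate proof of this corollary, but simply records the variable-$\hat r$ Dirac operator formula (quoted from Ammann--B\"ar and Ammann) and observes that with $m_r=-q/\hat r$ it takes the same shape as Corollary \ref{cor:formula Dirac DY spin and non-spin}, after which the argument of Theorem \ref{thm:main thm all n} (Weyl-component splitting plus Proposition \ref{prop:equiv SW multiplied phi}) goes through verbatim. Your explicit check that $m_r$, being the pullback of a nowhere-zero function on $X$, commutes with $Q$ and never gets differentiated is precisely the only point that needs verifying.
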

\begin{rem}\label{rem:GN}
Suppose that $\mu=0$ and the circle radius $r$ is constant. Then equation \eqref{eqn:DY equivalent SW eqns gen r}
\begin{equation*}
D^Y\psi=-\frac{1}{16m_r}|\psi|^2\psi+m_r\psi
\end{equation*} 
is the field equation of a $5$-dimensional Gross--Neveu model \cite{GN} (in Euclidean signature) with mass $m_r$ and coupling constant $g^2=\frac{1}{16m_r}$, given by the Lagrangian
\begin{equation}\label{eqn:GN Lagrangian}
\mathcal{L}[\psi]=\langle \psi,D^Y\psi\rangle -m_r|\psi|^2+\frac{1}{32m_r}|\psi|^4,
\end{equation}
where $\langle\cdot\,,\cdot\rangle$ is the Hermitian bundle metric on $S_Y$. Note that $m_r<0$ and $g^2<0$ if $q>0$. (The spinor $\phi$ on $X$ has charge $q$ and mass zero and the spinor $\psi$ on $Y$ has mass $m_r=-\frac{q}{r}$ and charge zero. If $r\ll |q|$, then the absolute value $|m_r|$ of the mass of $\psi$ is large, and if $r\gg |q|$, then $|m_r|$ is small. The interaction described by the term $\frac{1}{32m_r}|\psi|^4$ behaves in the opposite way.)
\end{rem}
\begin{rem}\label{rem:Ricci calc}
In Section \ref{sect:Sasaki} we need the following formulas: Suppose that $r>0$ is constant and $g_Y$ the Kaluza--Klein metric \eqref{eqn:KK metric gen r}. Then the circle fibres of $\pi\colon Y\rightarrow X$ are totally geodesic. For the curvature $2$-form $F_A\in\Omega^2(X,i\mathbb{R})$ and vectors $V,W\in TX$ define (see \cite[Chapter 9]{Besse})
\begin{align*}
\mathcal{A}_{V^*}W^*&=\frac{i}{2}F_A(V,W)K\\
(\mathcal{A}_{V^*},\mathcal{A}_{W^*})&=-\frac{1}{4}r^2\sum_{j=1}^4F_A(V,e_j)F_A(W,e_j)\\
(\mathcal{A}\tfrac{1}{r}K,\mathcal{A}\tfrac{1}{r}K)&=-\frac{1}{2}r^2\sum_{i<j}F_A(e_i,e_j)^2,
\end{align*}
where $\{e_i\}_{i=1}^4$ is an orthonormal frame in $T_pX$. The Ricci curvature of $g_Y$ then satisfies 
\begin{align*}
\mathrm{Ric}_{g_Y}(\tfrac{1}{r}K,\tfrac{1}{r}K)&=(\mathcal{A}\tfrac{1}{r}K,\mathcal{A}\tfrac{1}{r}K)\\
\mathrm{Ric}_{g_Y}(V^*,W^*)&=\mathrm{Ric}_{g_X}(V,W)-2(\mathcal{A}_{V^*},\mathcal{A}_{W^*}),\quad\forall V,W\in TX.
\end{align*}
If $F_A$ is coclosed and thus harmonic, we have in addition
\begin{equation*}
\mathrm{Ric}_{g_Y}(\tfrac{1}{r}K,V^*)=0\quad\forall V\in TX.
\end{equation*}
\end{rem}

\section{K\"ahler--Einstein $4$-manifolds and eigenspinors on Sasaki $5$-manifolds}\label{sect:Sasaki}
Spinors $\psi=Q(\phi)$ of constant length are an interesting case of equation \eqref{eqn:DY equivalent SW eqns gen r}, e.g.~for constant radius $r$ and perturbation $\mu=0$:
\begin{align*}
|\psi|^2&\equiv a^2,\quad\text{with $a>0$}\\
D^Y\psi&=\left(-\frac{1}{16m_r}a^2+m_r\right)\psi.
\end{align*}
Hence $\psi$ is an eigenspinor of $D^Y$.
\begin{rem}
In the special case that $a=4|m_r|$, the spinor $\psi$ is harmonic, $D^Y\psi=0$. The points $x=\pm 4|m_r|$ are the non-zero extrema of the potential 
\begin{equation*}
V(x)=-m_rx^2+\frac{1}{32m_r}x^4
\end{equation*}
appearing in the Gross--Neveu Lagrangian \eqref{eqn:GN Lagrangian} (which are minima for $m_r>0$ and maxima for $m_r<0$).
\end{rem}
Suppose that $\phi\in \Gamma(S_X^{c+})$ is an arbitrary spinor and $\psi=Q(\phi)$. Then $|\psi|\equiv a$ is equivalent to $|\phi|\equiv a$. Positive Weyl spinors $\phi$ of constant length are related to almost complex structures on $X$: Suppose that $J$ is a $g_X$-compatible almost complex structure on $X$, so that
\begin{equation*}
g_X(JV,JW)=g_X(V,W)\quad\forall V,W\in TX.
\end{equation*}
Then $(g_X,J,\omega)$, with self-dual fundamental $2$-form $\omega\in\Omega^2_+(X)$ defined by 
\begin{equation*}
\omega(V,W)=g_X(JV,W)\quad\forall V,W\in TX,
\end{equation*} 
is an almost Hermitian structure on $X$. There exists a canonical $\mathrm{Spin}^c$-structure $\mathfrak{s}_{X\mathrm{can}}^c$ on $X$ with spinor bundles
\begin{align*}
S_{X\mathrm{can}}^{c+}&=\Lambda^{0,0}\oplus \Lambda^{0,2}\\
S_{X\mathrm{can}}^{c-}&=\Lambda^{0,1}.
\end{align*}
We can write $S_{X\mathrm{can}}^{c+}=\underline{\mathbb{C}}\oplus K^{-1}$, where the canonical and anti-canonical line bundles are
\begin{equation*}
K=\Lambda^{2,0},\quad K^{-1}=\Lambda^{0,2}
\end{equation*} 
(we use the same symbol $K$ for the canonical bundle as for the unit Killing vector field in Section \ref{sect:kk circle bundles}; the meaning should be clear from the context). The characteristic line bundle of $\mathfrak{s}_{X\mathrm{can}}^c$ is $L=K^{-1}$. The spinor $\phi=(1,0)\in S_{X\mathrm{can}}^{c+}$ has constant length $|\phi|\equiv 1$. Conversely, every $\mathrm{Spin}^c$-structure $\mathfrak{s}_X^c$ with a positive spinor $\phi$ of constant length $1$ arises in this way for a $g_X$-orthogonal almost complex structure $J$ (see \cite{KM}, \cite{GS}).
\begin{lem}
Let $J$ be a $g_X$-compatible almost complex structure on $X$. The $\mathrm{Spin}^c$-structures $\mathfrak{s}_{X\mathrm{can}}^c$ and $\mathfrak{s}_{X\mathrm{can}}^c\otimes K$ have positive Weyl spinor bundles
\begin{align*}
S_{X\mathrm{can}}^{c+}&=\underline{\mathbb{C}}\oplus K^{-1}\\
S_{X\mathrm{can}}^{c+}\otimes K&=K\oplus\underline{\mathbb{C}}
\end{align*}
with characteristic line bundles $K^{-1}$ and $K$, respectively. The $\mathrm{Spin}^c$-structure $\mathfrak{s}_{X\mathrm{can}}^c\otimes K$ is the charge conjugate of $\mathfrak{s}_{X\mathrm{can}}^c$,
\begin{equation*}
S_{X\mathrm{can}}^{c}\otimes K=\bar{S}_{X\mathrm{can}}^{c}.
\end{equation*}
Clifford multiplication of the fundamental $2$-form $\omega$ on $S_{X\mathrm{can}}^{c+}$ is given by
\begin{equation}\label{eqn:Clifford action omega} 
\gamma(\omega)\left(\begin{array}{c}\alpha \\ \beta \end{array} \right)=-2i\left(\begin{array}{cc} 1 & 0 \\ 0 & -1 \end{array}\right)\left(\begin{array}{c}\alpha \\ \beta \end{array} \right)\quad\forall (\alpha,\beta)\in \underline{\mathbb{C}}\oplus K^{-1}.
\end{equation}
\end{lem}
For the expression for $\gamma(\omega)$ see e.g.~\cite[p.~112]{Morgan}. We consider the particular case \cite{W}, \cite{Don}, \cite{Morgan} where $(X,g_X,J,\omega)$ is a K\"ahler surface with integrable complex structure $J$, K\"ahler form $\omega$ (which is a self-dual, harmonic $2$-form) and compatible Riemannian metric $g_X$. In this situation explicit solutions to the Seiberg--Witten equations, with spinors of constant length, can be found. The following lemma summarizes some well-known facts about K\"ahler--Einstein manifolds, see e.g.~\cite{Besse}.
\begin{lem}\label{lem:KE curvature Chern class}
Let $(X,g_X,J,\omega)$ be a K\"ahler manifold. The Levi--Civita connection of $g_X$ defines Hermitian connections $A_{g_X}$ on $K$ and $A_{g_X}^{-1}$ on $K^{-1}$. If the metric $g_X$ is Einstein with Einstein constant $\lambda$, i.e.~
\begin{equation*}
\mathrm{Ric}_{g_X}=\lambda g_X,
\end{equation*}
or equivalently $\rho=\lambda\omega$ with the Ricci form $\rho$, then the curvature $2$-forms of these connections are self-dual and given by
\begin{equation*}
F_{A_{g_X}}=i\lambda\omega\quad\text{and}\quad F_{A_{g_X}^{-1}}=-i\lambda\omega.
\end{equation*}
We have
\begin{equation*}
c_1(K)=\frac{i}{2\pi}[F_{A_{g_X}}]=-\frac{\lambda}{2\pi}[\omega]=-c_1(K^{-1}).    
\end{equation*}
\end{lem}
\begin{prop}\label{prop:KE sol SW}
Let $(X,g_X,J,\omega)$ be a closed K\"ahler--Einstein surface with Einstein constant $\lambda$. We consider the Seiberg--Witten equations
\begin{equation}\label{eqn:KE SW eqns pertub}
\begin{split}
D_{A}^X\phi&=0\\
F_{A}^+&=\sigma(\phi,\phi)+it\omega
\end{split}
\end{equation}
with perturbation $\mu=it\omega$, where $t\in\mathbb{R}$.
\begin{enumerate}
\item For the $\mathrm{Spin}^c$-structure $\mathfrak{s}_{X\mathrm{can}}^c$, the connection $A_0=A_{g_X}^{-1}$ on $K^{-1}$ and the spinor
\begin{equation*}
\phi_0=(2\sqrt{-\lambda-t},0)\in \Gamma(\underline{\mathbb{C}}\oplus K^{-1})
\end{equation*} 
are a solution to the perturbed Seiberg--Witten equations \eqref{eqn:KE SW eqns pertub} for all $t<-\lambda$. We have $\gamma(\mu)\phi_0=2t\phi_0$.
\item For the $\mathrm{Spin}^c$-structure $\mathfrak{s}_{X\mathrm{can}}^c\otimes K$, the connection $A_0=A_{g_X}$ on $K$ and the spinor
\begin{equation*}
\phi_0=(0,2\sqrt{-\lambda+t})\in \Gamma(K\oplus\underline{\mathbb{C}})
\end{equation*} 
are a solution to the perturbed Seiberg--Witten equations \eqref{eqn:KE SW eqns pertub} for all $t>\lambda$. We have $\gamma(\mu)\phi_0=-2t\phi_0$.
\end{enumerate}
\end{prop}
\begin{proof}
We first consider the case with $\mathrm{Spin}^c$-structure $\mathfrak{s}_{X\mathrm{can}}^c$. For a general K\"ahler surface, the Dirac operator on $\Gamma(S_{X\mathrm{can}}^{c+})$ associated to the connection $A_0=A_{g_X}^{-1}$ on $K^{-1}$ is given by \cite{Hit} 
\begin{equation*}
D_{A_0}^X(\alpha,\beta)=\sqrt{2}(\bar{\partial}\alpha+\bar{\partial}^*\beta)\quad\forall (\alpha,\beta)\in \Gamma(\underline{\mathbb{C}}\oplus K^{-1}).
\end{equation*}
For a spinor $\phi_0=(\alpha,0)\in \Gamma(\underline{\mathbb{C}}\oplus K^{-1})$ we have by equation \eqref{eqn:Clifford action sigma phi phi}
\begin{equation*}
\gamma(\sigma(\phi_0,\phi_0))=\frac{1}{2}|\alpha|^2\left(\begin{array}{cc} 1 & 0 \\ 0 & -1  \end{array}\right).
\end{equation*}
If $(X,J,\omega,g_X)$ is K\"ahler--Einstein with Einstein constant $\lambda$, then by equation \eqref{eqn:Clifford action omega} and Lemma \ref{lem:KE curvature Chern class}
\begin{equation*}
\gamma(it\omega)=2t\left(\begin{array}{cc} 1 & 0 \\ 0 & -1 \end{array}\right),\quad \gamma(F_{A_0}^+)=-2\lambda\left(\begin{array}{cc} 1 & 0 \\ 0 & -1 \end{array}\right).
\end{equation*}
If $\alpha\in\mathbb{R}$ is a constant, the first equation in \eqref{eqn:KE SW eqns pertub} is satisfied and the second equation reduces to $-2\lambda=\frac{1}{2}\alpha^2+2t$. This implies the claim in the first case.

For the $\mathrm{Spin}^c$-structure $\mathfrak{s}_{X\mathrm{can}}^c\otimes K$ with connection $A_0=A_{g_X}$ on $K$ and spinor $\phi_0=(0,\beta)\in\Gamma(K\oplus\underline{\mathbb{C}})$ it follows analogously that
\begin{align*}
\gamma(\sigma(\phi_0,\phi_0))&=\frac{1}{2}|\beta|^2\left(\begin{array}{cc} -1 & 0 \\ 0 & 1  \end{array}\right)\\
\gamma(it\omega)&=2t\left(\begin{array}{cc} 1 & 0 \\ 0 & -1 \end{array}\right),\quad \gamma(F_{A_0}^+)=2\lambda\left(\begin{array}{cc} 1 & 0 \\ 0 & -1 \end{array}\right),
\end{align*}
where the third equation holds in the K\"ahler--Einstein case. If $\beta\in\mathbb{R}$ is a constant, the first equation in \eqref{eqn:KE SW eqns pertub} is satisfied and the second equation reduces to $2\lambda=-\frac{1}{2}\beta^2+2t$. This implies the claim in the second case.
\end{proof}
\begin{rem}
In particular, if $\lambda\geq 0$, then the perturbation $it\omega$ has to be chosen non-zero.
\end{rem}
The following Boothby-Wang construction of Sasaki structures on principal circle bundles over K\"ahler manifolds, whose K\"ahler form represents an integral class up to a real multiple, is well-known \cite{Hat}.
\begin{lem}
Suppose that $(X,g_X,J,\omega)$ is a K\"ahler manifold so that $\frac{1}{\pi r}[\omega]$ lies in the image of $H^2(X;\mathbb{Z})$ in $H^2_{dR}(X)$ for some $r\in\mathbb{R}^+$. Let $\pi\colon Y\rightarrow X$ be the principal circle bundle with Euler class $e(Y)=\pm\frac{1}{\pi r}[\omega]$ and $A$ a connection on $Y$ with curvature $2$-form satisfying $\frac{i}{2\pi}F_A=\pm\frac{1}{\pi r}\omega$. Then $\eta=\pm irA$ satisfies $d\eta=2\pi^*\omega$ and together with the Kaluza--Klein metric 
\begin{equation*}
g_Y=\pi^*g_X-r^2A\otimes A=\pi^*g_X+\eta\otimes \eta
\end{equation*}
defines a Sasaki structure on $Y$.
\end{lem}
We apply this construction in the case of K\"ahler--Einstein surfaces, where $\frac{\lambda}{2\pi}\omega$ represents an integral cohomology class.
\begin{prop}\label{prop:KE circle bundle Sasaki} Suppose that $(X,g_X,J,\omega)$ is a closed K\"ahler--Einstein surface with Einstein constant $\lambda\neq 0$. Let $\pi\colon Y\rightarrow X$ be one of the two Kaluza--Klein circle bundles with Euler class $e(Y)$, connection $A$, constant radius $r$ and Kaluza--Klein metric
\begin{equation*}
g_Y=\pi^*g_X-r^2A\otimes A
\end{equation*}
chosen as in Table \ref{table:circle bundle Y over X KE}.
\begin{table}
{\footnotesize
\caption{Kaluza--Klein circle bundle $\pi\colon Y^5\rightarrow X^4$ over K\"ahler--Einstein surface $X$}
\label{table:circle bundle Y over X KE}
\renewcommand{\arraystretch}{1.5}
\setlength{\tabcolsep}{0.50em}
\begin{tabular}{lcccccccc} 
\hline\noalign{\smallskip}
$X$ & $e(Y)$ & $A$ & $r$ & $\mathfrak{s}_Y$ & $S_{X}^{c,n}$ & $L_n$ & $q$ & $m_r$ \\ 
\noalign{\smallskip}\hline\noalign{\smallskip}
spin or non-spin & $c_1(K^{-1})$ & $A_{g_X}^{-1}$ & $\frac{2}{|\lambda|}$  & odd &  $S_{X\mathrm{can}}^c$, $S_{X\mathrm{can}}^c\otimes K$ & $K^{-1}$, $K$ & $\tfrac{1}{2}$, $-\tfrac{1}{2}$ & $-\tfrac{1}{4}|\lambda|$, $\tfrac{1}{4}|\lambda|$\\ 
spin & $c_1(K^{-\scriptscriptstyle\frac{1}{2}})$ & $\frac{1}{2}A_{g_X}^{-1}$ & $\frac{4}{|\lambda|}$ & even &  $S_{X\mathrm{can}}^c$, $S_{X\mathrm{can}}^c\otimes K$ & $K^{-1}$, $K$  & $1$, $-1$ & $-\tfrac{1}{4}|\lambda|$, $\tfrac{1}{4}|\lambda|$\\
\noalign{\smallskip}\hline
\end{tabular}
}
\end{table}
A Sasaki structure on $Y$ is defined by $g_Y$ and the $1$-form $\eta=\tfrac{2}{\lambda}iA_{g_X}^{-1}$ on $Y$, satisfying
\begin{equation*}
g_Y=\pi^*g_X+\eta\otimes\eta,\quad d\eta=2\pi^*\omega.
\end{equation*}
Then
\begin{equation*}
\mathrm{Ric}_{g_Y}=(\lambda-2)g_Y+(6-\lambda)\eta\otimes \eta,\quad \mathrm{scal}_{g_Y}=4(\lambda-1),
\end{equation*}
hence the Sasaki structure is Sasaki $\eta$-Einstein. Furthermore, the $5$-manifold $Y$ is spin and a lift $\mathfrak{s}_Y$ of the $\mathrm{Spin}^c$-structure $\mathfrak{s}_{X\mathrm{can}}^c$ is chosen as in Definition \ref{defn:main construction Y from X spinc}.
\end{prop}
\begin{proof}
See \cite{BGM}, \cite{FOW} for the definition of Sasaki $\eta$-Einstein structures. Note that in both cases $\eta=\pm irA$, depending on whether the sign of $\lambda$ is $\pm$. The formula for the Ricci curvature follows from Remark \ref{rem:Ricci calc}, because the K\"ahler form $\omega$ is harmonic and $|\omega|^2=2$, hence for both cases
\begin{equation*}
(\mathcal{A}\tfrac{1}{r}K,\mathcal{A}\tfrac{1}{r}K)=4,\quad (\mathcal{A}_{V^*},\mathcal{A}_{W^*})=g_X(V,W).
\end{equation*}
\end{proof}
\begin{rem}\label{rem:Sasaki-Einstein}
Suppose that the Einstein constant $\lambda$ is positive and define $g'_X=cg_X$ for a constant $c\in\mathbb{R}^+$. Then $g'_X$ is K\"ahler--Einstein with $\lambda'=\frac{1}{c}\lambda$. In particular, $g_X$ can be normalized such that $\lambda=6=\dim X+2$. Then $g_Y$ is a Sasaki--Einstein metric with $\mathrm{Ric}_{g_Y}=4g_Y$.
\end{rem}
\begin{thm}
Let $(X,g_X,J,\omega)$ be a closed K\"ahler--Einstein surface with Einstein constant $\lambda\neq 0$ and $\pi\colon Y\rightarrow X$ one of the two Kaluza--Klein circle bundles with Sasaki $\eta$-Einstein structure and spin structure $\mathfrak{s}_Y$ given by Proposition \ref{prop:KE circle bundle Sasaki}.

Consider the solutions $(A_0,\phi_0)$, with $|\phi_0|\equiv\mathrm{const}.$, of the Seiberg--Witten equations on $X$ with perturbation $it\omega$ (for a suitable $t\in\mathbb{R}$) given by Proposition \ref{prop:KE sol SW}. Define
\begin{equation*}
\nu_\lambda=-\tfrac{1}{4}|\lambda|-\mathrm{sgn}(\lambda)=\begin{cases} -\frac{1}{4}|\lambda|+1& \text{if } \lambda<0 \\ -\frac{1}{4}\lambda-1 & \text{if } \lambda>0\end{cases}    
\end{equation*}
\begin{enumerate}
\item For the solution $\phi_0\in\Gamma(S_{X\mathrm{can}}^{c+})$ the lifted spinor $\psi=Q(\phi_0)\in \Gamma(S_Y)$ satisfies
\begin{align*}
|\psi|^2&\equiv 4(-\lambda-t)\\
D^Y\psi&=\nu_\lambda\psi.
\end{align*}
\item For the solution $\phi_0\in\Gamma(S_{X\mathrm{can}}^{c+}\otimes K)$ the lifted spinor $\psi=Q(\phi_0)\in \Gamma(S_Y)$ satisfies
\begin{align*}
|\psi|^2&\equiv 4(-\lambda+t)\\
D^Y\psi&=-\nu_\lambda\psi.
\end{align*}
\end{enumerate}
\end{thm}
\begin{proof}
This follows from
\begin{equation*}
|\psi|^2\equiv a^2\quad\text{and}\quad D^Y\psi=\left(-\frac{1}{16m_r}a^2+m_r-\frac{1}{8m_r}\gamma(\pi^*\mu)\right)\psi
\end{equation*}
with $a$ and $\gamma(\mu)\phi_0$ from Proposition \ref{prop:KE sol SW} and $m_r$ from Table \ref{table:circle bundle Y over X KE}.
\end{proof}
\begin{rem}
The eigenvalues can also be written as $\nu_\lambda = m_r-\mathrm{sgn}(\lambda)$ in the first case and $-\nu_\lambda = m_r+\mathrm{sgn}(\lambda)$ in the second case, which relates them to the mass $m_r$ of the spinor $\psi$.
\end{rem}
\begin{rem}
If $\lambda<0$ and $g_X$ is normalized such that $\lambda =-4$, the spinor $\psi$ is harmonic in both cases, $D^Y\psi=0$.
\end{rem}
\begin{rem}
Suppose that $\lambda>1$. Then the scalar curvature $\mathrm{scal}_{g_Y}=4(\lambda-1)$ of $(Y,g_Y)$ is positive and the lifted spinors $\psi\in\Gamma(S_Y)$ are eigenspinors of the Dirac operator $D^Y$ with eigenvalue
\begin{equation*}
\nu_{\pm}=\pm\left(\frac{1}{4}\lambda+1\right).
\end{equation*}
According to a theorem of Friedrich \cite{F_EW} the eigenvalues $\nu_{\pm}$ have to satisfy
\begin{equation}\label{eqn:Friedrich ineq}
\nu_{\pm}^2\geq \frac{\dim Y}{4(\dim Y-1)}\mathrm{scal}_{g_Y}
\end{equation}
i.e.~in our situation
\begin{equation*}
\left(\frac{1}{4}\lambda+1\right)^2-\frac{5}{4}(\lambda-1)\geq 0.
\end{equation*}
This inequality holds, because the left-hand side is equal to the square $\frac{1}{16}(\lambda-6)^2$. Equality in \eqref{eqn:Friedrich ineq} holds if and only if $\lambda=6$, i.e.~$(Y,g_Y)$ is a Sasaki--Einstein manifold by Remark \ref{rem:Sasaki-Einstein}. In this case the eigenvalues are $\nu_{\pm}=\pm\frac{5}{2}$ and by \cite{F_EW} the spinors $\psi$ are Killing spinors,
\begin{equation*}
\nabla^Y_V\psi=\mp\frac{1}{2}V\cdot \psi\quad\forall V\in TY.
\end{equation*}
The existence of two linearly independent Killing spinors on complete, simply connected Sasaki--Einstein $5$-manifolds is well-known \cite{FK}, \cite{Baer_KS} (by the Bonnet--Myers Theorem the fundamental group of any complete Sasaki--Einstein manifold is finite). 

Lower bounds on the eigenvalues of $D^Y$ have also been proved in \cite[Theorem 6.1]{Kim} for the case of $\mathrm{scal}_{g_Y}> -4$, i.e.~Einstein constant $\lambda>0$, which for certain values of $\lambda$ are improvements of the estimate \eqref{eqn:Friedrich ineq}. It can be checked that $\nu_{\pm}=\pm\left(\frac{1}{4}\lambda+1\right)$ satisfy these bounds as well.  
\end{rem}

\section*{Acknowledgments}
I would like to thank the anonymous referee for a number of comments and corrections that helped to improve the quality of the paper. I would also like to thank Bernd Ammann and Uwe Semmelmann for discussions on a preprint version.

\bibliography{SWLift5MfdsBib}
\bibliographystyle{plain}

\end{document}